\font\chuto=cmbx10 at 16pt \font\kamy=lcmssb8
\font\kam=lcmss8 at 8pt 
\date{}
\newtheorem{theorem}{Theorem}[section]
\newtheorem{proposition}[theorem]{Proposition}
\newtheorem{thm}{Theorem}[section]
\newtheorem{lem}[thm]{Lemma}
\newtheorem{prop}[thm]{Proposition}
\newtheorem{rem}[thm]{Remark}
\numberwithin{equation}{section} 
\newcommand{\R}{\ensuremath{\mathbb{R}}}
\newcommand{\dint}{\displaystyle\int}
\begin{document}
%

\centerline {\bf \chuto On jumps stochastic slowly diffusion equations}

\vskip.2cm

\centerline {\bf \chuto   with fast oscillation coefficients  \footnotetext{Email addresses : cmanga@univ-zig.sn (C. Manga), augusteaman5@yahoo.fr (A. Aman), adiedhiou@univ-zig.sn (A. Diédhiou), a.coulibaly5649@zig.sn (A. Coulibaly).}}


\vskip.8cm \centerline {\kamy C. Manga$^\dag,$ A. Aman$^\ddag,$ A. Coulibaly$^\dag$\footnote{{\tt Corresponding author email: a.coulibaly5649@zig.sn} \\
} and A. Diédhiou$^\dag$}

\vskip.5cm

\centerline {$^\dag$Laboratory of Mathematics and Applications, UFR Sciences \& Technologies,}
\centerline {University of Assane SecK, UASZ, BP 523, Ziguinchor, Senegal.} 


\centerline {$^\ddag$Department of Mathematics, UFR de Math\'ematiques et Informatique,}
\centerline {F\'elix H. Boigny University of Cocody, 22 BP 582 Abidjan 22, Ivory Coast.}


\vskip.5cm \hskip-.5cm{\small{\bf Abstract :} We present a large deviation principle for some stochastic evolution equations with jumps which depend on two small parameters, when the viscosity parameter $\varepsilon$ tends to zero more quickly than the homogenization's one $\delta_{\varepsilon}$ (written as a function of $\varepsilon$). In particular, we highlighted a large deviation principle in path-space using some classical  techniques and a uniform upper bound for the characteristic function of a Feller process, in the following sense:
\[
\lim_{\varepsilon\to 0}\dfrac{\delta_{\varepsilon}}{\varepsilon}=+\infty.
\]	

\vskip0.3cm\noindent {\bf Keywords :} Homogenization, Large Deviations, Poisson point process of class.
(QL), Feller semigroup.

\noindent{\bf 2000 Mathematics Subject Classification : } 35B27, 35K57, 60F10, 60G52, 60H15, 60J25.

\hrulefill


\section{Introduction}
\hskip0.6cm
Let $\varepsilon,\delta_{\varepsilon}>0$, we consider a diffusion which allows jumps processes in $\R^d$ satisfying the stochastic differential equations (SDE) :

\begin{equation}
\begin{aligned}\label{sde1}
\begin{split}
X_{t}^{x,\varepsilon,\delta_{\varepsilon}}=x&-\sqrt{\varepsilon}\int_{0}^{t}\sigma\left(\frac{X^{x,\varepsilon,\delta_{\varepsilon}}_{s}}{\delta_{\varepsilon}}\right)dW_{s}+\frac{\varepsilon}{\delta_{\varepsilon}}\int_{0}^{t}b\left(\frac{X^{x,\varepsilon,\delta_{\varepsilon}}_{s}}{\delta_{\varepsilon}}\right)ds\\
&+\int_{0}^{t}c\left(\frac{X^{x,\varepsilon,\delta_{\varepsilon}}_{s}}{\delta_{\varepsilon}}\right)ds+L_{t}^{\varepsilon,\delta_{\varepsilon}},\ x\in\R^{d},
\end{split}
\end{aligned}
\end{equation}
where $\displaystyle\left\lbrace W_{t}:\ t\geqslant 0\right\rbrace $ is a $d$-dimensional standard Brownian motion and $\displaystyle L^{\varepsilon,\delta_{\varepsilon}}:=\left\lbrace L_{t}^{\varepsilon,\delta_{\varepsilon}}:\ t\geqslant 0\right\rbrace $ is a Poisson point process with continuous compensator, independent of $W$, both defined  on a given filtered probability space $\displaystyle\left(\Omega,\mathcal{F},\mathbb{P},\mathbb{F}\right)$ with  $\mathbb{F}:=\displaystyle\left\lbrace\mathcal{F}_{t}:\ t\geqslant 0 \right\rbrace $ being the $\mathbb{P}$-completion of the filtration $\mathcal{F}$.  More precisely, we assume that $L^{\varepsilon,\delta_{\varepsilon}}$ takes the form:
\begin{equation}
L^{\varepsilon,\delta_{\varepsilon}}_{t}:=\int_{0}^{t}\int_{\mathbb{R}^{d}}k\left( \frac{X^{x,\varepsilon,\delta_{\varepsilon}}_{s-}}{\delta_{\varepsilon}},y\right)\left( \varepsilon N^{\varepsilon^{-1}}(dsdy)-\nu(dy)ds\right),\quad t\geqslant 0 
\end{equation}
where $k$ is a given predictable function (see \cite{IW81}, Chap.IV Sect.9), and $N$ is a Poisson random counting measure on $\mathbb{R}^{d}$ with mean L\`{e}vy measure (or intensity measure) $\nu$. Our assumptions on the coefficients $b,c,\sigma$  will be specified below.\\

The main purpose of this paper is to show that under suitable assumptions on the coefficients and the two parameters ($\varepsilon,\delta_{\varepsilon}$), the process $X_{t}^{x,\varepsilon,\delta_{\varepsilon}}$ (\ref{sde1}) satisfies a \textit{large deviation principle} (LDP) with good rate function. The family of equations (\ref{sde1}) subject to $\varepsilon$ (viscosity parameter) and $\delta_{\varepsilon}$ (homogenization parameter) is a classical problem which goes back to Paolo Baldi \cite{Ba91} at the end of 20'th century. In the case when  $L_{t}^{\varepsilon,\delta_{\varepsilon}}\equiv 0$,  the SDE (\ref{sde1}) becomes one driven by an (additive) Brownian motion and the similar issues was  extensively investigated by Freidlin and Sowers \cite{FS99}. They have shown three classical regimes depending on the relative rate at which the small viscosity coefficient $\varepsilon$ and the homogenization parameter $\delta_{\varepsilon}$ tend to zero.  They have provided some effective rate functions associated to an LDP for SDE which have been used as direct applications to wavefront propagation. Subsequently, large deviations problems were studied extensively by many researchers; see, for example, the work of Schilder for LDP of Brownian motions, Sanov for LDP of ergodic processes, and Freidlin and Wentzell for LDP of diffusions. Be that as it may, there are still few results on LDP for stochastic evolution equations with jumps (see, for example \cite{RZ07,Wu11,ZX16}).\\

The main difficulty in the study of SDE (\ref{sde1}), however, is the presence of the jumps. As $\varepsilon$ and $\delta_{\varepsilon}$ tend to zero, two well-known effects come into play. The effect of the viscosity parameter $\varepsilon$ generates small excitations, hence the equation (\ref{sde1}) is called \textit{slowly diffusion}, and the effect of the homogenization parameter $\delta_{\varepsilon}$ makes the coefficients with fast oscillations. In previous publications, we obtained several results on the LDP of equation (\ref{sde1}). In particular we discussed
\begin{itemize}
	\item the case where the homogenization parameter $\delta_{\varepsilon}$ tends to zero much faster than the viscosity's one  $\varepsilon$ (see, \cite{MC119});
	\item the situation in which the two parameters go at the same rate (see, \cite{MC219}).
\end{itemize}	
In this paper, we extends our results to the case that  $\displaystyle\lim_{\varepsilon\to 0}\frac{\delta_{\varepsilon}}{\varepsilon}$ is infinite. That is $\varepsilon$ tends to zero sufficiently quickly compared to $\delta_{\varepsilon}$. To do so, we should first treat $\delta_{\varepsilon}$ as fixed and carry out the calculations for slowly varying $\varepsilon$,  then the theory of \textit{large deviation} tells us how quickly $X^{x,\varepsilon,\delta_{\varepsilon}}$ tends to the deterministic dynamics given by actually setting $\varepsilon$ to zero. \\

Our aim consists in computing the limit of $\varepsilon\log\mathbb{P}\left\lbrace X^{x,\varepsilon,\delta_{\varepsilon}}\in A\right\rbrace $ when $\varepsilon$ and $\delta_{\varepsilon}$ approach zero, where $A$ is a Borel subset of $\mathcal{D}_{x}\left([0,T],\R^d\right) $, the set of c\`{a}dl\`{a}d functions on $[0,T]$ with $\R^d$-values which take $x$ at zero. In other word, we study the LDP for $X^{x,\varepsilon,\delta_{\varepsilon}}$, and prove that there exists  $I_{0,T}: \mathcal{D}_{x}\left([0,T],\R^d\right)\longrightarrow\left[0,+\infty\right] $ such that
\begin{itemize}
	\item for each open set $F\subseteq \mathcal{D}_{x}\left([0,T],\R^d\right)$
	\[
	\liminf_{\varepsilon\to 0}\varepsilon\log\mathbb{P}\left\lbrace X^{x,\varepsilon,\delta_{\varepsilon}}\in F\right\rbrace\geqslant-\inf_{\phi\in F}I_{0,T}(\phi),
	\]
	\item for each closed set $G\subseteq \mathcal{D}_{x}\left([0,T],\R^d\right)$
	\[
	\limsup_{\varepsilon\to 0}\varepsilon\log\mathbb{P}\left\lbrace X^{x,\varepsilon,\delta_{\varepsilon}}\in G\right\rbrace\leqslant-\inf_{\phi\in G}I_{0,T}(\phi).
	\]
\end{itemize}

The rest of this paper is organized as follows. In Section (\ref{s2}) we set up some notation, make precisely our hypothesis
and state the main result. In Section (\ref{s3}) we give the proofs of lower and upper bounds on LDP.

\section{The main results: Preliminaries and formulation}\label{s2}
\subsection{Notation and background}
\noindent Denote expectation with respect to $\mathbb{P}$ by $\mathbb{E}$ and the gradient operator by $\nabla$. We have already defined $\left\langle .,.\right\rangle $  as the standard Euclidean inner product on $\mathbb{R}^{d}$, and $\left\| .\right\| $ as the associated norm. Let $C_{p}\left(\mathbb{R}^{d},\mathbb{R}^{d}\right) $ be the collection of continuous mapping from $\mathbb{R}^{d}$ into $\mathbb{R}^{d}$ which are periodic of period $1$ in each coordinate of the argument and let $\left\| .\right\|_{C_{p}\left(\mathbb{R}^{d},\mathbb{R}^{d} \right) } $ be the associated sup norm. Let $\displaystyle\mathcal{D}\left([0,T],\R^{d}\right) $ be the space of functions that map $[0,T]$ into $\R^{d}$,  which are right continuous and having left hand limits. $\mathcal{D}\left([0,T],\R^{d}\right)$ is metricated by the Skorohod metric, with respect to which it is complete and separable (for a definition and properties of this space see Chap.$3$, \cite{EK86})). We will write $C_{c}^{\infty}\left( \R^d\right) $ for space of test functions. \\

\noindent The $\left\lbrace \sigma_{i}:\ 1\leqslant i\leqslant d \right\rbrace $ in (\ref{sde1}) are assumed to be in $C_{p}\left(\mathbb{R}^{d},\mathbb{R}^{d}\right)$, and we also assume that
\begin{equation}\label{uniel}
\kappa:=\inf\left\lbrace\sum_{i=1}^{d}\left\langle \theta,\sigma_{i}(x)\right\rangle^{2}:\ x\in\mathbb{R}^{d}, \theta\in\mathbb{R}^{d}, \left\| \theta\right\|=1  \right\rbrace>0. 
\end{equation}
We assume that $b,c$ in (\ref{sde1}) are in $C_{p}\left(\mathbb{R}^{d},\mathbb{R}^{d}\right)$. \\

\noindent We now turn our attention to the Poisson part. We first consider a Poisson random measure $N^{\varepsilon^{-1}}(.,.)$ on $\left[ 0,T\right) \times\mathbb{R}^{d} $ defined on the space probability $\left(\Omega,\mathcal{F},\mathbb{P}\right) $,  with L\`{e}vy measure $\varepsilon^{-1}\nu$  such that the standard integrability condition holds:
\begin{equation}\label{spp}
\int_{\mathbb{R}^{d}\backslash\{0\}}\left(1\wedge|y|^{2}\right)\nu(dy)<+\infty. 
\end{equation}
The compensator of $\varepsilon N^{\varepsilon^{-1}}$ is thus the deterministic measure $\displaystyle\varepsilon\tilde{N}^{\varepsilon^{-1}}(dtdy):=dt\nu(dy)$ on $\left[ 0,T\right) \times\mathbb{R}^{d} $ .  In
this paper we shall be interested in \textit{Poisson point process} of class (QL), namely a point process whose counting measure has continuous compensator (see Ikeda and Watanabe \cite{IW81}). More precisely, in light of the representation theorem of the Poisson point process (\cite{IW81}, Chap. II, Theorem $7.4$), we shall assume that $L^{\varepsilon,\delta_{\varepsilon}}$ is a pure jump process of the following form :
\[
L^{\varepsilon,\delta_{\varepsilon}}_{t}:=\int_{0}^{t}\int_{\mathbb{R}^{d}\backslash\{0\}}k\left(\frac{.}{\delta_{\varepsilon}},y\right)(s)\left( \varepsilon N^{\varepsilon^{-1}}(dsdy)-\nu(dy)ds\right),\quad t\geqslant 0,
\]
where $k$ is $C_{p}\left(\mathbb{R}^{d}\times\mathbb{R}^{d},\mathbb{R}^{d}\right)$ with respect to first variable, integrable with respect to $dtdy$,  so that  $L^{\varepsilon,\delta_{\varepsilon}}$ has continuous statistic.\\

\noindent The Markov processes $X^{\varepsilon,\delta_{\varepsilon}}$ that we consider include jump processes and diffusion. Next let's write down its generator on twice continuously differentiable functions with compact support by
\begin{equation}
\begin{split}\label{op}
&\mathcal{L}_{\varepsilon,\delta_{\varepsilon}}\phi\left(x\right):=\\
&\frac{\varepsilon}{2}\sum_{i,j=1}^{d}a_{ij}\left(\frac{x}{\delta_{\varepsilon}}\right)\frac{\partial^{2}\phi(x)}{\partial x_{i}\partial x_{j}} +\frac{\varepsilon}{\delta_{\varepsilon}}\sum_{i=1}^{d}b_{i}\left(\frac{x}{\delta_{\varepsilon}}\right)\frac{\partial\phi(x)}{\partial x_{i}}+\sum_{i=1}^{d}c_{i}\left(\frac{x}{\delta_{\varepsilon}}\right) \frac{\partial\phi(x)}{\partial x_{i}}\\
&+\frac{1}{\varepsilon}\int_{\mathbb{R}^{d}}\left[\phi\left(x+\varepsilon k\left(\frac{x}{\delta_{\varepsilon}},y\right) \right)-\phi(x)-\varepsilon\sum_{i=1}^{d} k_{i}\left(\frac{x}{\delta_{\varepsilon}},y\right)\frac{\partial\phi(x)}{\partial x_{i}}  \right]\nu(dy),
\end{split}
\end{equation}
where the matrix $a:=\left(a_{ij}\right) $ is factored as $a:=\sigma\sigma^{*}$, and $^{*}$ denotes the transpose.  The following hypotheses are required :
\begin{description}
	\item[\textbf{H.1}] (Main hypothesis) $\displaystyle \lim_{\varepsilon\to 0}\frac{\delta_{\varepsilon}}{\varepsilon}=+\infty$.
	\item[\textbf{H.2}] $\left\lbrace 
	\begin{aligned}
	& \textrm{There exists $C_{1}>0$ such that for any $\zeta:=\sigma_{i},b,c,\ 1\leqslant i\leqslant d,\ $ and $\ k$ :}\\
	&\textrm{i)}\left\|\zeta(x')-\zeta(x) \right\|+\int_{\mathbb{R}^{d}}\left\| k(x',y)-k(x,y)\right\|\nu(dy) \leqslant C_{1}\left\|x'-x \right\|,\forall x',x\in\mathbb{R}^{d}. \\
	& \textrm{There exists $C_{2}>0$ such that for any  $\zeta:=\sigma_{i},b,c,\ 1\leqslant i\leqslant d,\ $ and $\ k$ :}\\
	&\textrm{ii)}\left\| \zeta(x)\right\|^{2} + \int_{\mathbb{R}^{d}}\left\| k(x,y)\right\|^{2}\nu(dy) \leqslant C_{2}\left(1+\left\|x\right\|^{2}\right),\quad\forall x\in\mathbb{R}^{d} .
	\end{aligned}
	\right. $
\end{description}
\noindent The proof of the Proposition \ref{prop1} (below) uses the following \textit{Girsanov's formula}. Before proceeding, let us introduce some space. \\
Let $\mathcal{H}^2(T,\lambda)$ be the linear space of all equivalence classes of mappings $F:[0, T] \times \R^d\times \Omega \longrightarrow \mathbb{R}$ 
which coincide almost everywhere with respect to $dt \otimes d\lambda \otimes d\mathbb{P}$  and which satisfy the following conditions :
\begin{itemize}
	\item $ F$ is predictable;
	\item $ \displaystyle \int_0^T \int_{\R^d\backslash\{0\}}\mathbb{E}\Big( | F(t,z)|^2\Big) dt \lambda(dz) < + \infty .$
\end{itemize}
We endow $\mathcal{H}^2(T,\lambda)$ with the inner product 
$$\displaystyle  \left\langle F,G \right\rangle_{T, \lambda}:= \int_0^T \int_{\R^d\backslash\{0\}}\mathbb{E}\Big( F(t,z)G(t,z)\Big) dt \lambda(dz) . $$ 
Then, it is well know that $\left(\mathcal{H}^2(T,\lambda);\left\langle .,. \right\rangle_{T, \lambda}\right)$  is a real separable Hilbert space.\\
Let $N_{p}$  be a Poisson random measure on $\mathbb{R}_{+}\times\left(\R^d\backslash\{0\}\right) $ with intensity measure $\lambda$, according to a given $\mathcal{F}_{T}$-adapted, $\sigma$-finite point process $p$ which is independent of the Brownian motion $W$. Let $\widetilde{N}_{p}$ be the associated compensated Poisson random measure. Now we have (see, D. Applebaum \cite{Ap09}  Chapter $5$, Section $2$)
\begin{lem}[\textbf{Girsanov's formula}]$\quad$
	\\ Let $X$ be a L\'evy process such that $e^{X}$ is a martingale, \textit{i.e}:
	\[
	X_t= \int_0^t b(s) ds + \int_0^t \sigma (s) d W_s + \int_0^t \int_EH(s, z)\widetilde{N}_{p}(dsdz)+   \int_0^t \int_E K(s,z)N_{p}(dsdz),
	\] with
	\[
	b(t)= -\frac{1}{2}\sigma^2 (t) - \int_E \left( e^{H(t, z)}-1 - H(t,z)\right) \lambda(dz) - \int_E\left( e^{K(t, z)}-1 \right)\lambda(dz), \  \mathbb{P}-a.s.
	\]
	We suppose that there exists $C>0$ such that
	\[
	\left| K(t, z)\right|\leqslant C,\quad \forall t\geqslant0, \forall z\in E .
	\]
	For $L \in \mathcal{H}^2 (T, \lambda)$ we define
	\[
	M_t := \int_0^t \int_{z\neq 0} L(s, z) \widetilde{N}(dsdz).
	\]
	Set
	\[
	U(t,z)= \left( e^{H(t, z)}-1 \right)\boldsymbol{1}_{\big\{\left\|z\right\|< 1 \big\} }+ \left( e^{K(t, z)}-1 \right)\boldsymbol{1}_{\big\{\left\|z\right\|\geqslant 1 \big\} }
	\] and we suppose that
	\[
	\int_0^T \int_{\big\{\left\|z\right\|\leqslant 1 \big\}} \left( e^{H(s, z)}-1 \right)^2 \lambda(dz)ds < + \infty .
	\]
	Finally, we define 
	\[
	B_t= W_t - \int_0^t \sigma(s) ds\quad
	\textrm{ and }\quad
	N_t= M_t -\int_0^t \int_{z\neq 0} L(s, z)U(s,z)\lambda(dz)ds, \ \ 0 \leq t \leq T.
	\]
	Let $\boldsymbol{Q}$ be the probability measure on $(\Omega, \mathcal{F}_T)$ defined as:
	\[
	\dfrac{d\boldsymbol{Q}}{d\mathbb{P}}:= e^{\displaystyle X_T}.
	\]
	Then under $\boldsymbol{Q}$, $B_t $ is a Brownian motion and $N_t$ is a $\boldsymbol{Q}$-martingale.
\end{lem}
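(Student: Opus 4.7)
The strategy is the classical Girsanov argument adapted to jump-diffusions. The first task is to check that $d\boldsymbol{Q}/d\mathbb{P}=e^{X_T}$ defines a probability measure, i.e. $\mathbb{E}[e^{X_T}]=1$; this is exactly the assumption that $e^{X}$ is a martingale. I would still sketch why the specific form of $b(t)$ is forced: applying the Itô formula for semimartingales with jumps to $f(x)=e^x$, one obtains a decomposition of $e^{X_t}$ in which the drift $b(t)$, the Itô correction $\tfrac12\sigma^2(t)$, the small-jump compensator of $(e^{H}-1-H)$ and the large-jump term involving $(e^K-1)$ cancel exactly, leaving only stochastic integrals against $dW$ and $\widetilde{N}_{p}$. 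The boundedness of $K$ together with the square-integrability condition on $(e^{H}-1)$ upgrades the resulting local martingale to a genuine martingale via standard localization.

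Next, to identify $B_t$ as a $\boldsymbol{Q}$-Brownian motion, I would use the Bayes rule: an adapted càdlàg process $Y$ is a $\boldsymbol{Q}$-local martingale iff $e^{X_{\cdot}}Y$ is a $\mathbb{P}$-local martingale. Integration by parts gives
\[
d(e^{X_t}B_t)=B_{t-}\,d(e^{X_t})+e^{X_{t-}}\,dB_t+d[e^{X},B]_t,
\]
and since $B-W$ is continuous of finite variation, $[e^{X},B]_t=[e^{X},W]_t=\int_0^t e^{X_{s-}}\sigma(s)\,ds$, which cancels exactly the finite-variation part $-e^{X_{t-}}\sigma(t)\,dt$ coming from $dB_t$. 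The product is then a pure stochastic integral, hence a $\mathbb{P}$-local martingale, so $B$ is a $\boldsymbol{Q}$-local martingale. Since $B$ is continuous with quadratic variation $\langle B\rangle_t=t$ (invariant under an equivalent change of measure), Lévy's characterization identifies it as a $\boldsymbol{Q}$-Brownian motion.

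For $N_t$ the same Bayes reduction reduces the claim to showing that $e^{X_t}N_t$ is a $\mathbb{P}$-local martingale. The crucial step is to compute the jump bracket
\[
[e^{X},M]_t=\sum_{s\leqslant t}\Delta e^{X_s}\,\Delta M_s=\int_0^t\!\!\int_{z\neq 0}e^{X_{s-}}U(s,z)L(s,z)\,N_{p}(ds\,dz),
\]
using $\Delta e^{X_s}=e^{X_{s-}}(e^{\Delta X_s}-1)$ together with $\Delta X_s=H(s,z)\mathbf{1}_{\{\|z\|<1\}}+K(s,z)\mathbf{1}_{\{\|z\|\geqslant1\}}$, whose exponential correction is precisely $U(s,z)$. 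Compensating $N_{p}$ to $\widetilde{N}_{p}$ produces the term $\int_0^t\!\int e^{X_{s-}}L(s,z)U(s,z)\,\lambda(dz)\,ds$, which is exactly what is subtracted in the definition of $N_t$. Assembling the pieces, $e^{X_t}N_t$ writes as a sum of stochastic integrals against $dW$ and $\widetilde{N}_{p}$, hence a $\mathbb{P}$-local martingale, and Bayes concludes that $N$ is a $\boldsymbol{Q}$-martingale.

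The main obstacle is the bookkeeping in the jump-bracket calculation: identifying correctly that the small-jump/large-jump dichotomy built into $U$ is precisely what is needed so that the Doob–Meyer compensator of $\sum \Delta e^{X}\Delta M$ matches the subtracted drift. Once the candidate $U$ is in hand, everything else is standard localization using the integrability hypotheses on $H$, $K$, and $L\in\mathcal{H}^2(T,\lambda)$.
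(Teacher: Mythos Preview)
The paper does not actually prove this lemma: it is stated with the attribution ``see, D.~Applebaum \cite{Ap09} Chapter~5, Section~2'' and no proof is given. Your proposal is the standard textbook argument (Bayes' rule reduction plus Itô product formula and Lévy's characterization), which is essentially what one finds in Applebaum, so there is nothing to compare against and your sketch is correct as a self-contained proof.
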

The following remark will be useful in the proof of Proposition \ref{prop1}.
\begin{rem}
	It is well known that if $N$ is a Poisson process with intensity $\lambda(s)$ and with compensated martingale associated  $M$, and  If $g$ is a $\left] -1,+\infty\right[ $-values bounded Borel function   then the following is a martingale,
	\[
	X_{t}:=\exp\left( \int_{0}^{t}\log\left(1+g(s) \right)dM_{s}-\int_{0}^{t}\left[ \log\left(1+g(s) \right)-g(s)\right]\lambda(s)  \right) .
	\] 
\end{rem}

\noindent The proof of the Proposition \ref{prop2} (below) uses an \textit{analytic} approach and appeals to the  \textit{classical Hartman-Wintner condition} (see, Lemma  \ref{hwc}). \\

\noindent In order to simplify the further exposition, we briefly outline the method which we use to bound the density
function $p(t, x, y)$ with respect to Lebesgue measure. Let $\left( \boldsymbol{T}_{t}\right)_{t\geqslant 0} $ be a semigroup on $C^{\infty}\left(\R^d\right) $ the space of continuous functions vanishing at infinity. We define its (infinitesimal) generator $\mathcal{L}$ as
follows:
\[
\mathcal{L}\phi:=\lim_{t\to 0}\dfrac{\boldsymbol{T}_{t}\phi -\phi}{t}, \textrm{ for all }\phi\in \mathcal{D}(\mathcal{L}).
\]
Here $\mathcal{D}(\mathcal{L})$ is the set of all $\phi$ on $C^{\infty}\left(\R^d\right) $ for which the limit exists in a strong sense,\textit{ i.e.}. with respect to the \textit{sup-norm}. A useful approach to the study of the generator in case it is a pseudo-differential operator is to study its \textit{symbol}. Let $\mathcal{L}$ be a generator of a Feller process with $C_{c}^{\infty}\left( \R^d\right)\subset \mathcal{D}\left( \mathcal{L}\right)$.  Then the
restriction of $\mathcal{L}$ on $C_{c}^{\infty}\left( \R^d\right)$ is a pseudo-differential operator, given by
\begin{equation}
\mathcal{L}\phi(x) :=-\dint e^{i\left\langle x,\xi\right\rangle }q(x,\xi)\hat{\phi}(\xi)d\xi\ \textrm{ for }\phi\in C_{c}^{\infty}\left( \R^d\right),
\end{equation}
where $\hat{\phi}$ denotes the Fourier transform of $\phi$ described by 
$$\displaystyle\hat{\phi}(\xi)=\frac{1}{(2\pi)^{d/2}}\int e^{-i\left\langle x,\xi\right\rangle }\phi(x)dx,\quad \xi\in\R^d.$$
The function $q(.,.) : \R^d\times\R^d\longrightarrow \mathbb{C} $ is called the symbol of the operator $\mathcal{L}$. The symbol $q(x,\xi)$ is locally bounded in $(x,\xi)$, measurable as a function of $x$, and for every fixed $x\in\R^d$ it is a continuous negative definite function in the co-variable. That is, it is given by a L\`{e}vy-Khintchine formula of the form 
\begin{equation}
\begin{split}
q(x,\xi):=c(x)&-i\left\langle d(x),\xi\right\rangle+\frac{1}{2}\left\langle \xi,Q(x)\xi\right\rangle\\
&+\int_{z\neq 0} \left( 1-e^{i\left\langle z,\xi\right\rangle }+i\left\langle z,\xi\right\rangle \boldsymbol{1}_{\left\lbrace \left\| z\right\| \leqslant 1\right\rbrace }\right) \lambda\left( x,dz\right),
\end{split}
\end{equation} 
with $c\geqslant 0,\  d\in\R^{d},\  Q\in\R^{d\times d}$ a positive-semidefinite symmetric matrix and $\lambda$ a is a non-negative, $\sigma$-finite kernel on $\R^{d}\times \mathcal{B}\left( \R^{d}\backslash \{0\}\right) $. A detailed exposition of the use of the symbol in the study of Markov processes can be found in \cite{Ja01,Ja02,Ja05}. It is well know  that Feller generators are \textit{variable coefficient} L\`{e}vy-type operators
and once we fix $x$ then $ -p$ is the generator of a L\`{e}vy process. Notice that $p$ is no longer the characteristic exponent of the Feller
process $\left( Y_{t}\right)_{t\geqslant 0} $, that is, the formula $\displaystyle \mathbb{E}\left(e^{\displaystyle i\left\langle \xi,Y_{t}-x\right\rangle }\right) = e^{\displaystyle -tq(x,\xi)}$ is, in general, an inaccurate result. However, it is natural to expect that
\[
\mathbb{E}\left(e^{\displaystyle i\left\langle \xi,Y_{t}-x\right\rangle }\right) \approx e^{\displaystyle -tq(x,\xi)}.
\]
The following plays a pivotal role in the sequel, because it allows us to show the existence of a transition density of a Feller process and to link it explicitly in terms of symbol (see, for instance \cite{SW19}).
\begin{lem}[\bf Existence of density]$\quad$\label{hwc}\\
	Let $\left( \left( Z_{t},\right)_{t\geqslant 0}, \mathbb{P}\right) $ be a Feller process with generator $\left( \mathcal{L},\mathcal{D}\left( \mathcal{L}\right) \right) $, such that $C_{c}^{\infty}\left( \R^d\right)\subset \mathcal{D}\left( \mathcal{L}\right)$. Then $\mathcal{L}\arrowvert_{C_{c}^{\infty}\left( \R^d\right)}=-q\left( .,D\right) $ is a pseudo-differential operator with symbol $q$. Assume
	that $q$ satisfies the properties: for some $C > 0 $ and for all $\xi\in\R^d$
	\begin{equation}\label{c1}
	\left\| q(.,\xi)\right\|_{\infty} \leqslant C\left( 1+\left\| \xi\right\| ^2\right)\  \textrm{ and }\ q(.,0)=0.
	\end{equation}
	If moreover (\textnormal{\bf Hartman-Wintner condition})
	\begin{equation}\label{c2}
	\lim_{\left\| \xi \right\| \to \infty}\dfrac{\inf_{z\in\R^d}\textrm{\textnormal{Re} }q\left( z,\xi\right) }{\log\left(1+\left\|\xi \right\|  \right) }=\infty,
	\end{equation}
	then the process $\left( Z_{t}\right)_{t\geqslant 0}$ has a transition density $p(t, x, y),\ t\in\left[0,\infty\right),\   x, y\in\R^d$, with respect
	to the Lebesgue measure and the following inequality holds for $t > 0$:
	\begin{equation}
	\sup_{x,y\in\R^d}p(t,x,y)\leqslant\dint\exp\left( -\frac{t}{16}\inf_{z\in\R^d}\textrm{\textnormal{Re} }q\left( z,\xi\right) \right) d\xi.
	\end{equation}
\end{lem}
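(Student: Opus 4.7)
The plan is to proceed by Fourier analysis. Writing $\mu_{t}^{x}$ for the law of $Z_{t}-x$ under $\mathbb{P}^{x}$, the whole argument reduces to two ingredients: a uniform upper estimate on the characteristic function $\widehat{\mu_{t}^{x}}(\xi)=\mathbb{E}^{x}\bigl[e^{i\langle\xi,Z_{t}-x\rangle}\bigr]$ governed by $\inf_{z\in\R^{d}}\mathrm{Re}\,q(z,\xi)$, together with integrability of this bound in $\xi$, which is precisely what the Hartman-Wintner condition \eqref{c2} is designed to supply. Once both ingredients are in place, the Fourier inversion formula delivers the density and the announced sup-bound simultaneously.

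To obtain the characteristic-function estimate I would start from the pseudo-differential identity $\mathcal{L}\bigl[e^{i\langle\xi,\cdot\rangle}\bigr](x)=-q(x,\xi)\,e^{i\langle\xi,x\rangle}$, justified on the core $C_{c}^{\infty}(\R^{d})$ by approximating $e^{i\langle\xi,\cdot\rangle}$ with compactly supported cut-offs and passing to the limit using the growth bound \eqref{c1}. Dynkin's formula then gives
\[
\mathbb{E}^{x}\bigl[e^{i\langle\xi,Z_{t}-x\rangle}\bigr]
=1-\int_{0}^{t}\mathbb{E}^{x}\bigl[q(Z_{s},\xi)\,e^{i\langle\xi,Z_{s}-x\rangle}\bigr]\,ds.
\]
Setting $\phi_{t}(\xi):=\sup_{x\in\R^{d}}\bigl|\mathbb{E}^{x}\bigl[e^{i\langle\xi,Z_{t}-x\rangle}\bigr]\bigr|$, the Markov property yields the submultiplicative inequality $\phi_{s+t}(\xi)\leqslant\phi_{s}(\xi)\phi_{t}(\xi)$. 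Combined with the short-time estimate $\phi_{t}(\xi)\leqslant 1-c\,t\inf_{z}\mathrm{Re}\,q(z,\xi)+o(t)$ extracted from the previous identity and iterated dyadically over $n$ sub-intervals of length $t/2^{n}$, one obtains in the limit $n\to\infty$
\[
\phi_{t}(\xi)\leqslant\exp\!\left(-\frac{t}{16}\inf_{z\in\R^{d}}\mathrm{Re}\,q(z,\xi)\right),
\]
the explicit constant $\tfrac{1}{16}$ arising from the precise form of the small-time estimate (where only the real part of $q$ survives in the modulus).

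For the conclusion, \eqref{c2} ensures that for every $M>0$ one has $\inf_{z}\mathrm{Re}\,q(z,\xi)\geqslant M\log(1+\|\xi\|)$ outside some ball, so the integrand $\exp\bigl(-\tfrac{t}{16}\inf_{z}\mathrm{Re}\,q(z,\xi)\bigr)$ is dominated there by $(1+\|\xi\|)^{-tM/16}$ and belongs to $L^{1}(\R^{d})$ for any $M>16d/t$. Since the characteristic function of $\mu_{t}^{x}$ is therefore absolutely integrable uniformly in $x$, Fourier inversion produces a bounded continuous density $y\mapsto p(t,x,y)$ given by
\[
p(t,x,y)=\frac{1}{(2\pi)^{d}}\int_{\R^{d}}e^{-i\langle\xi,y-x\rangle}\mathbb{E}^{x}\bigl[e^{i\langle\xi,Z_{t}-x\rangle}\bigr]\,d\xi ;
\]
taking absolute values under the integral, inserting the characteristic-function bound, and using $(2\pi)^{-d}<1$ yields the stated inequality.

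The principal obstacle is the uniform characteristic-function bound with its explicit constant: because $q(x,\xi)$ genuinely depends on $x$, one cannot invoke the L\'evy identity $\mathbb{E}\bigl[e^{i\langle\xi,Z_{t}\rangle}\bigr]=e^{-tq(\xi)}$ that trivialises the spatially homogeneous case, so the $x$-dependence must be tracked through the Markov property and the dyadic iteration above. The remaining steps — extending Dynkin's formula to the non-compactly-supported test function $e^{i\langle\xi,\cdot\rangle}$, Fourier inversion for finite measures with $L^{1}$ characteristic function, and the logarithmic growth argument derived from Hartman-Wintner — are routine.
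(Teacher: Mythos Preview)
The paper does not supply its own proof of this lemma: it is stated as a quotation from the literature, with an explicit pointer to \cite{SW19} (Schilling--Wang) in the sentence immediately preceding the statement. There is therefore no in-paper argument to compare against.

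Your sketch is essentially the Schilling--Wang proof. The two structural ingredients you isolate --- the uniform bound
\[
\sup_{x\in\R^d}\bigl|\mathbb{E}^{x}\bigl[e^{i\langle\xi,Z_{t}-x\rangle}\bigr]\bigr|\leqslant\exp\Bigl(-\tfrac{t}{16}\inf_{z}\mathrm{Re}\,q(z,\xi)\Bigr)
\]
obtained by combining Dynkin's formula with the Markov-submultiplicativity $\phi_{s+t}\leqslant\phi_{s}\phi_{t}$ and a short-time expansion, followed by Fourier inversion made possible by the Hartman--Wintner growth condition --- are exactly the steps in that reference. The technical point you flag (extending the pseudo-differential identity to the non-$C_c^\infty$ function $e^{i\langle\xi,\cdot\rangle}$ via cut-offs and the quadratic growth bound \eqref{c1}) is the genuine place where care is needed, and you have identified it correctly. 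Your outline would be accepted as a faithful reconstruction of the cited result.
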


\noindent Before finishing this section, we point out that if the canonical process is Feller under $\mathbb{P}$, so that all  requirements of Lemma \ref{hwc} are satisfied, it is a \textit{strong} Feller process, that is, its semigroup maps  bounded measurable functions to continuous bounded functions. In this case, the symbol can also be written as
\begin{equation}
q(x,\xi):=-\lim_{t\to 0}\frac{\mathbb{E}\left( e^{i\left\langle Y_{t}-x,\xi\right\rangle }\right) -1}{t}.
\end{equation}
Hence, the symbol can be probabilistically  interpreted as the derivative of the characteristic function of the process  (defined entirely in analytic terms), \textit{i.e.}
\begin{equation}
\frac{\textrm{d}}{\textrm{dt}}\lambda_{t}(x,\xi)\Big\arrowvert_{t=0}=-q(x,\xi)=e^{-i\left\langle x,\xi\right\rangle }\mathcal{L}e^{i\left\langle x,\xi\right\rangle },\quad x,\xi\in\R^d,
\end{equation}
where $\displaystyle\lambda_{t}(x,\xi):=e^{-i\left\langle x,\xi\right\rangle }\boldsymbol{T}_{t}e^{i\left\langle x,\xi\right\rangle }(x)$.
\subsection{The main results}
Before proceeding, let us have some definitions.\\

\noindent $(\boldsymbol{D.1})\ V_{L}: \R^d\times\R^d\times\mathcal{H}^{2}\left( L,\nu\right) \longrightarrow\left[ 0,+\infty\right]$  the energy function defined as: 
\begin{equation*}\label{enerf}
V_{L}(x,z,\phi):=V_{L}^{1}(x,z)+V_{L}^{2}(\phi)
\end{equation*}  
where
\[
\begin{aligned}
&V^{1}_{L}(x,z):=\inf_{\substack{\psi\in \mathcal{D}\left(\left[0,L\right];\ \R^d\right)\\\psi_0=x,\ \psi_L=z}}\frac{1}{ 2}\int_{0}^{L}\left\|\dot{\psi}_s-c\left(\psi_s\right) -k\left( \psi_{s}\right) \right\|^{2}_{a^{-1}\left(\psi_s\right)} ds\\
&V^{2}_{L}(\phi):=\inf_{\substack{\phi\in \mathcal{H}^{2}\left(L,\nu\right)\\\phi\geqslant 0}}\int_{0}^{L}\int_{\R^d}\Big(\phi \log\left(\phi\right) -\phi+1\Big)(s,y) ds\nu(dy),
\end{aligned}
\]
\noindent with the norm $\left\|\theta \right\|_{a^{-1}}:=\sqrt{\left\langle\theta,a^{-1}\theta\right\rangle}$ for all $\theta\in\mathbb{R}^d$, and with $\ \displaystyle k(z):=\int_{\R^d}k(z,y)\nu(dy)$.\\

\noindent$(\boldsymbol{D.2})\ \mathcal{J} : \R^{2}\rightarrow\left[0\right.,\left.\infty\right) $  the functional given by:
\begin{equation*}\label{ftaux}
\mathcal{J}(z)=\lim_{L\to +\infty}\frac{\scriptstyle 1}{\scriptstyle L}V_{L}\left( 0,L z, 1+\phi\right).
\end{equation*} 

\noindent The following is our main result.
\begin{thm}\label{main theorem}
	Fix $x\in\R^d$, assume (\textbf{H.1}) and (\textbf{H.2}) hold. Then we have	
	\begin{itemize}
		\item for each open set $G\subseteq \mathcal{D}\left([0,T],\R^d\right)$
		\[
		\liminf_{\varepsilon\to 0}\varepsilon\log\mathbb{P}\left\lbrace X_{T}^{x,\varepsilon,\delta_{\varepsilon}}\in G\right\rbrace\geqslant-\inf_{\substack{\varphi\in \mathcal{D}\left([0,T],\R^{d}\right)\\
				\varphi(0)=x,\ \varphi(T)=z}}\int_{0}^{T}\mathcal{J}\left(\dot{\varphi}(s)\right)ds,
		\]
		\item for each closed set $F\subseteq \mathcal{D}\left([0,T],\R^d\right)$
		\[
		\limsup_{\varepsilon\to 0}\varepsilon\log\mathbb{P}\left\lbrace X_{T}^{x,\varepsilon,\delta_{\varepsilon}}\in F\right\rbrace\leqslant-\inf_{\substack{\varphi\in \mathcal{D}\left([0,T],\R^{d}\right)\\
				\varphi(0)=x,\ \varphi(T)=z}}\int_{0}^{T}\mathcal{J}\left(\dot{\varphi}(s)\right)ds.
		\]
	\end{itemize}
\end{thm}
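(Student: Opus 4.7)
The plan is to follow the classical Freidlin--Wentzell scheme, adapted to the Poisson setting and exploiting the fact that hypothesis \textbf{H.1} forces the viscosity parameter to vanish much faster than the homogenization parameter. I first freeze $\delta_{\varepsilon}$ and treat $\varepsilon$ as the sole driving small parameter of a ``slowly diffusing'' jump process whose coefficients $b,c,\sigma,k$ are evaluated at the periodic fast variable $X/\delta_{\varepsilon}$. On any interval of positive length the fast variable executes many periods and self-averages with respect to its invariant measure; this averaging, combined with an exponential change of measure and the pointwise density estimate of Lemma~\ref{hwc}, will produce the finite-time rate $V_{L}$.

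The first technical step is a local LDP: for fixed $x_{0},z\in\R^d$ and small $L>0$, I would establish
\[
\lim_{\eta\downarrow 0}\lim_{\varepsilon\to 0}\varepsilon\log\mathbb{P}\bigl\{X^{x_{0},\varepsilon,\delta_{\varepsilon}}_{L}\in B(z,\eta)\bigr\} = -V_{L}(x_{0},z,1+\phi^{*})
\]
uniformly on compacts. The lower bound follows from Girsanov's lemma applied with a Brownian drift $\sigma^{-1}(\dot\psi - c - \overline{k})$ and a jump-intensity multiplier $1+\phi$ chosen so that the tilted process concentrates near a candidate trajectory $\psi$; after multiplication by $\varepsilon$ the logarithmic Radon--Nikodym derivative yields exactly $V_{L}^{1}+V_{L}^{2}$. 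The upper bound follows from an exponential Chebyshev inequality combined with Lemma~\ref{hwc}: the Hartman--Wintner condition applied to the (rescaled) symbol of $\mathcal{L}_{\varepsilon,\delta_{\varepsilon}}$ produces the required control of $\sup_{x,y}p_{\varepsilon,\delta_{\varepsilon}}(L,x,y)$, from which the quadratic-plus-entropy cost is extracted by Fenchel duality.

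The second step is to extract the flux $\mathcal{J}$ by letting $L\to\infty$: using periodicity and the subadditivity of $V_{L}$ along concatenations, $L^{-1}V_{L}(0,Lz,1+\phi)$ admits a limit independent of the initial state, which is exactly $\mathcal{J}(z)$ by $(\boldsymbol{D.2})$. The third step is to patch the local LDPs into the path-space statement. I would discretize $[0,T]$ into $n$ subintervals and iterate the strong Markov property of $X^{x,\varepsilon,\delta_{\varepsilon}}$: each increment contributes, at the exponential scale, a cost $(T/n)\,\mathcal{J}(n\Delta_{k}\varphi/T)$, and letting $n\to\infty$ converts the Riemann sum into $\int_{0}^{T}\mathcal{J}(\dot\varphi(s))\,ds$. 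To upgrade convergence from cylinder sets to arbitrary open/closed sets in the Skorokhod topology, I would establish exponential tightness in $\mathcal{D}([0,T],\R^d)$ via a Bernstein-type exponential inequality for the compensated Poisson integral, using the linear growth bound from \textbf{H.2}.

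The main difficulty will be the interplay between averaging and jumps: the naive freezing argument requires an ergodic theorem that is uniform at the exponential scale, which is delicate because $\nu$ need only satisfy the weak integrability~(\ref{spp}) and may be infinite near the origin. I would overcome this by truncating small jumps at a threshold tuned to $\varepsilon$, using Lemma~\ref{hwc} to absorb the small-jump martingale into the diffusive cost $V_{L}^{1}$, while the large-jump contribution is captured by the Girsanov tilt that produces $V_{L}^{2}$. Provided this split is carried out uniformly in $\delta_{\varepsilon}$ under hypothesis \textbf{H.1}, the lower and upper bounds match and the theorem follows.
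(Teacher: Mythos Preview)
Your plan matches the paper's approach on both core estimates: the lower bound via a Girsanov tilt that simultaneously shifts the Brownian drift and the jump intensity is exactly Proposition~\ref{prop1}, and the upper bound via the symbol of $\mathcal{L}_{\varepsilon,\delta_{\varepsilon}}$, Lemma~\ref{hwc}, and Legendre duality (your $\hat Q_1^{*},\hat Q_2^{*}$) is exactly Proposition~\ref{prop2}.

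Where you diverge is in the passage from the one-time marginal bound to the path-space statement. The paper does not discretize $[0,T]$ and iterate the Markov property as you propose; instead it observes at the start of Section~\ref{s3} that $\mathcal{J}$ is convex, so the path infimum collapses to $T\mathcal{J}((z-x)/T)$, and then records exponential tightness in $\mathcal{D}^{\lambda}([0,T],\R^d)$ as a one-line remark invoking projective limits. Your discretization route is more robust and would be needed if $\mathcal{J}$ were not convex, but here it is overkill. Likewise, your small-jump truncation step is a sensible precaution against the weak integrability~(\ref{spp}), but the paper simply absorbs all jump sizes into the single symbol estimate of Lemma~\ref{hwc} and never splits the L\'evy measure; so that part of your plan is extra scaffolding rather than a required ingredient.
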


\section{Large Deviation Principle}\label{s3}
Before proceeding, we observe that the function $\mathcal{J}$ is convex, hence we can show that
\[
\inf_{\substack{\varphi\in \mathcal{D}\left([0,T],\R^{d}\right)\\
		\varphi(0)=x,\ \varphi(T)=z}}\int_{0}^{T}\mathcal{J}\left(\dot{\varphi}(s)\right)ds:=T\mathcal{J}\left(\dfrac{z-x}{T} \right) .
\]
Next we are going to give the outline of the proof.
\subsection{The lower bound}
We start with the following \textit{lower bound} in space.
\begin{prop}\label{prop1}
	Suppose the assumptions ($ \boldsymbol{H.1}$) to ($\boldsymbol{H.2}$) hold. For each open subset $\ G\subseteq \R^{d}$ we have
	\[
	\liminf_{\varepsilon\to 0}\varepsilon\log\mathbb{P}\left\lbrace X_{1}^{x,\varepsilon,\delta_{\varepsilon}}\in G\right\rbrace \geqslant -\inf_{ z\in G } \mathcal{J}(z-x).
	\]
\end{prop}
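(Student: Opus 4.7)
The plan is to reduce the open-set lower bound to a local estimate at each point $z\in G$, and to produce that local bound by a Girsanov tilt steered by a near-optimal pair for the variational problem defining $\mathcal{J}(z-x)$. Openness of $G$ gives $\eta>0$ with $B(z,\eta)\subset G$, so it is enough to show
\[
\liminf_{\varepsilon\to 0}\varepsilon\log\mathbb{P}\{X_1^{x,\varepsilon,\delta_\varepsilon}\in B(z,\eta)\}\;\geq\;-\mathcal{J}(z-x),
\]
and then take the supremum over $z\in G$.

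Given $\gamma>0$, I would use $(\boldsymbol{D.1})$--$(\boldsymbol{D.2})$ to pick $L=L_\gamma$ large, a path $\psi\in\mathcal{D}([0,L];\R^d)$ with $\psi_0=0$, $\psi_L=L(z-x)$, and $\phi\in\mathcal{H}^2(L,\nu)$, $\phi\geq 0$, such that $L^{-1}\{V^1_L(0,L(z-x))+V^2_L(1+\phi)\}\leq \mathcal{J}(z-x)+\gamma$. Rescaling to $[0,1]$ via $\tilde\psi_s:=x+L^{-1}\psi_{Ls}$ and $\tilde\phi(s,y):=\phi(Ls,y)$ preserves the cost, joins $x$ to $z$, and yields $h_s^\varepsilon:=\dot{\tilde\psi}_s-c(\tilde\psi_s)-k(\tilde\psi_s)$. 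The Girsanov lemma above is then applied with Brownian drift $h^\varepsilon/\sqrt\varepsilon$ and jump-intensity multiplier $1+\tilde\phi$; the resulting $\boldsymbol{Q}^\varepsilon\ll\mathbb{P}$ makes $X^{x,\varepsilon,\delta_\varepsilon}$ a process whose mean trajectory is $\tilde\psi$, with $O(\sqrt\varepsilon)$ fluctuations.

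Writing $A_\varepsilon:=\{X_1^{x,\varepsilon,\delta_\varepsilon}\in B(z,\eta)\}$, the standard change-of-measure / Jensen inequality gives
\[
\varepsilon\log\mathbb{P}(A_\varepsilon)\;\geq\;-\frac{\varepsilon}{\boldsymbol{Q}^\varepsilon(A_\varepsilon)}\mathbb{E}^{\boldsymbol{Q}^\varepsilon}\!\left[\mathbf{1}_{A_\varepsilon}\log\tfrac{d\boldsymbol{Q}^\varepsilon}{d\mathbb{P}}\right]+\varepsilon\log\boldsymbol{Q}^\varepsilon(A_\varepsilon).
\]
The logarithm of the density expands into a quadratic term of size $\tfrac{1}{2\varepsilon}\int_0^1\|h_s^\varepsilon\|_{a^{-1}(\tilde\psi_s/\delta_\varepsilon)}^2\,ds$, an entropy term $\tfrac{1}{\varepsilon}\int_0^1\!\int_{\R^d}(\tilde\phi\log\tilde\phi-\tilde\phi+1)\,\nu(dy)ds$, plus $\boldsymbol{Q}^\varepsilon$-martingale increments. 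Because $\delta_\varepsilon/\varepsilon\to\infty$ by $(\textbf{H.1})$, the fast variable $X^{x,\varepsilon,\delta_\varepsilon}/\delta_\varepsilon$ equilibrates on the torus and the periodic coefficients average to their effective values along $\tilde\psi$, so the total deterministic contribution matches $\mathcal{J}(z-x)+\gamma$. Concentration $\boldsymbol{Q}^\varepsilon(A_\varepsilon)\to 1$ follows from a Chebyshev estimate based on the Feller density bound of Lemma~\ref{hwc}, whose Hartman-Wintner hypothesis is inherited from the untilted symbol because the tilt is bounded. Letting first $\varepsilon\to 0$ and then $\gamma\to 0$ closes the argument.

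The main obstacle will be justifying the averaging step within the tilted measure. Because the Radon-Nikodym density is exponential in $1/\varepsilon$, replacing the rapidly oscillating coefficients $\sigma(\cdot/\delta_\varepsilon)$, $c(\cdot/\delta_\varepsilon)$, $k(\cdot/\delta_\varepsilon,\cdot)$ by their torus averages must cost only $o(1/\varepsilon)$ in the log-likelihood; this requires uniform-in-$\varepsilon$ exponential integrability of the jump compensator under $\boldsymbol{Q}^\varepsilon$, relying crucially on $(\textbf{H.2})$ together with control of the symbol $q(x,\xi)$ of the tilted Feller process so that Lemma~\ref{hwc} still yields density bounds stable as $\varepsilon\to 0$.
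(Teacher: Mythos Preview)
Your overall architecture (reduce to a ball around $z$, Girsanov tilt, Jensen on the log-density) matches the paper, but two concrete choices diverge from the paper's proof and create a genuine gap.

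\medskip
\noindent\textbf{1. The drift in the tilt should be adapted, not frozen.} You take the Brownian shift $h_s^\varepsilon=\dot{\tilde\psi}_s-c(\tilde\psi_s)-k(\tilde\psi_s)$, i.e.\ with coefficients evaluated along the target path. The paper instead tilts by
\[
\xi(t)=\sigma^{-1}\!\Big(\tfrac{X_t}{\delta_\varepsilon}\Big)\Big[\dot\psi(t)-\tfrac{\varepsilon}{\delta_\varepsilon}b\big(\tfrac{X_{t-}}{\delta_\varepsilon}\big)-c\big(\tfrac{X_{t-}}{\delta_\varepsilon}\big)+\!\int k\big(\tfrac{X_{t-}}{\delta_\varepsilon},y\big)\nu(dy)\Big],
\]
which exactly cancels the drift of the SDE. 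This is what forces $\hat{\mathbb P}(A^\eta_{\varepsilon,\delta_\varepsilon})\to 1$ directly by Girsanov, without any appeal to density bounds. With your frozen drift the residual $c(X_s/\delta_\varepsilon)-c(\tilde\psi_s)$ is $O(1)$, so the claim that the tilted process has ``mean trajectory $\tilde\psi$ with $O(\sqrt\varepsilon)$ fluctuations'' is unsupported, and invoking Lemma~\ref{hwc} does not help: that lemma yields an \emph{upper} bound on the transition density and is used by the paper only for Proposition~\ref{prop2}, not here.

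\medskip
\noindent\textbf{2. No averaging step is needed.} The paper avoids the obstacle you single out. Once the tilt is adapted, one works on the tube $A^\eta_{\varepsilon,\delta_\varepsilon}=\{\sup_t\|X_t/\delta_\varepsilon-\psi(t)/\delta_\varepsilon\|\le\eta\}$, bounds $\tfrac12\int_0^1\|\xi(s)\|^2ds$ pathwise by a supremum over perturbations of size~$\eta$, and controls that supremum by Young's inequality exactly as in Freidlin--Sowers \cite{FS99} (this is inequality~(\ref{inqq})). A time rescaling of the resulting integral then produces precisely $\delta_\varepsilon\,V_{1/\delta_\varepsilon}(x/\delta_\varepsilon,z/\delta_\varepsilon,1+\phi)$, whose limit as $\delta_\varepsilon\to 0$ is $\mathcal J(z-x)$ \emph{by definition} $(\boldsymbol{D.2})$. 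So the passage from oscillating coefficients to $\mathcal J$ is a rescaling plus the definition of $\mathcal J$, not an ergodic/averaging theorem; in particular $L$ is not fixed in advance but is $1/\delta_\varepsilon$. The martingale remainders are handled by Jensen and Burkholder--Davis--Gundy (again following \cite{FS99}, Lemma~4.4), not by symbol estimates.

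\medskip
In short: switch to the adapted drift $\xi$, replace the averaging argument by the tube-plus-rescaling device of \cite{FS99}, and drop Lemma~\ref{hwc} from this proof.
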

\begin{proof}
	Let $\psi\in \mathcal{D}\left([0,1],\R^{d}\right)  $ satisfying $\psi_0=x$, $\psi_1=z$. Let us set $$\hat{X}_{t}^{\varepsilon,\delta_{\varepsilon}}:=\dfrac{1}{\sqrt{\varepsilon}}\left(X_{t}^{x,\varepsilon,\delta_{\varepsilon}}-\psi(t)\right).$$
	
	\noindent Now, fix $z\in G$, $\eta>0$ and let $\delta_{\varepsilon}'>0$ be small enough  so that 
	\[
	\left\lbrace z'\in\R^{d} : \left\|z'-z \right\|\leqslant \eta\delta_{\varepsilon}'  \right\rbrace \subseteq G.
	\]
	Fix also $0<\delta_{\varepsilon}<\delta_{\varepsilon}'$, then
	\[
	\begin{split}
	\mathbb{P}\left\lbrace X_{1}^{x,\varepsilon,\delta_{\varepsilon}}\in G\right\rbrace &\geqslant \mathbb{P}\left\lbrace\left\|X_{1}^{x,\varepsilon\delta_{\varepsilon}}-z \right\|\leqslant \eta\delta_{\varepsilon}\  \right\rbrace\\
	&\geqslant \mathbb{P}\left\lbrace\left\|X_{1}^{x,\varepsilon\delta_{\varepsilon}}-\psi \right\|_{\mathcal{D}\left([0,1],\R^{d}\right)}\leqslant \eta\delta_{\varepsilon}  \right\rbrace\\
	& \geqslant \mathbb{P}\Big\{\underbrace{\big\Vert\hat{X}_{1}^{\varepsilon,\delta_{\varepsilon}}\big\Vert_{\mathcal{D}\left([0,1],\R^{d}\right)}\leqslant\frac{\delta_{\varepsilon}}{\sqrt{\varepsilon}}\eta  }_{\displaystyle:=A_{\varepsilon,\delta_{\varepsilon}}^{\eta}}\Big\}.
	\end{split} 
	\]
	
	\noindent Next, define
	\[
	\begin{aligned}
	&\begin{split}
	\xi(t):=&\left[ \dot{\psi}(t) -\frac{\varepsilon}{\delta_{\varepsilon}}b\left(\frac{\scriptstyle X_{t-}^{x,\varepsilon,\delta_{\varepsilon}}}{\scriptstyle\delta_{\varepsilon}}\right) -c\left(\frac{\scriptstyle X_{t-}^{x,\varepsilon,\delta_{\varepsilon}}}{\scriptstyle\delta_{\varepsilon}}\right)\right. \\
	&\qquad\left. +\int_{\R^d}k\left(\frac{\scriptstyle X_{t-}^{x,\varepsilon,\delta_{\varepsilon}}}{\scriptstyle\delta_{\varepsilon}},y\right)\nu(dy)\right] \times\sigma^{-1} \left(\frac{X_{t}^{x,\varepsilon,\delta_{\varepsilon}}}{\delta_{\varepsilon}}\right),
	\end{split}\\
	&\hat{W}_{t}:=W_{t}-\frac{1}{\sqrt{\varepsilon}}\int_{0}^{t}\xi(s)ds,\\
	&\begin{split}
	\hat{N}_{z}^{\varepsilon^{-1}}\left([0,t],U\right) :=&\frac{\scriptstyle1}{\scriptstyle\varepsilon}\log\left(1+\phi(t,y)\right)\left( \varepsilon N^{\varepsilon^{-1}}\left([0,t],U\right)- \nu(U)\right) \\
	&-\frac{\scriptstyle1}{\scriptstyle\varepsilon}\log\left(1+\phi(t,y)\right)\Bigg(\phi(s,y)\boldsymbol{1}_{\{\left\| y\right\| < 1\}}  \\
	&\qquad\qquad\qquad\qquad\qquad+\left(e^{k(z,y)}-1\right)\boldsymbol{1}_{\{\left\| y\right\| \geqslant 1\}}\Bigg)\nu(U),
	\end{split}
	\end{aligned}
	\]
	for $z\in\R^d,\ U\in\mathcal{B}\left(\R^{d}\right) $ and $\phi\in\mathcal{H}^{2}(1,\nu)$.\\
	
	\noindent By Girsanov's formula, we introduce the measure $\hat{\mathbb{P}}$ on $\left(\Omega,\mathcal{F}\right)$ defined as :
	\[
	\begin{split}
	&\dfrac{d\hat{\mathbb{P}}}{d\mathbb{P}}:=e^{\left(-\dfrac{\scriptstyle1}{\scriptstyle2\varepsilon}\dint_{0}^{1}\left\|\xi(s) \right\|^{2}ds-\frac{\scriptstyle 1}{\scriptstyle\varepsilon}\int_{0}^{1}\int_{\R^d} \Big(\phi(s,y)-\log\left(1+\phi(s,y)\right) \Big)\nu(dy)ds\right)  }\\
	&\times e^{\left(\displaystyle\frac{\scriptstyle1}{\scriptstyle\sqrt{\varepsilon}}\int_{0}^{1}\xi(s)dW_{s}-\frac{\scriptstyle 1}{\scriptstyle\varepsilon}\int_{0}^{1}\int_{\R^d}\log\left( 1+\phi(s,y)\right)\left(\varepsilon N^{\varepsilon^{-1}}(dsdy)-\nu(dy)ds\right)\right) }\\
	&\times e^{\left(\displaystyle -\frac{\scriptstyle 1}{\scriptstyle\varepsilon}\int_{0}^{1}\int_{\R^d}k\left( \frac{\scriptstyle X^{x,\varepsilon,\delta_{\varepsilon}}_{s-}}{\scriptstyle\delta_{\varepsilon}},y\right)\varepsilon N^{\varepsilon^{-1}\log\left( 1+\phi(s,y)\right)}(dsdy)\right) }\\
	&\times e^{\Bigg(\displaystyle\frac{\scriptstyle 1}{\scriptstyle\varepsilon}\int_{0}^{1}\int_{\R^d}\log\left(1+\phi(s,y)\right)\Bigg(e^{\displaystyle k\left( \frac{\scriptstyle X^{x,\varepsilon,\delta_{\varepsilon}}_{s-}}{\scriptstyle\delta_{\varepsilon}},y\right)}-1\Bigg)\boldsymbol{1}_{\{\left\| y\right\| \geqslant 1\}}\nu(dy)ds\Bigg)}.
	\end{split}
	\]
	It follows that
	\begin{equation}\label{girsa}
	\begin{split}
	&\mathbb{P}\left( A_{\varepsilon,\delta_{\varepsilon}}^{\eta}\right)=\hat{\mathbb{E}}\Bigg\{ \boldsymbol{1}_{\displaystyle A_{\varepsilon,\delta_{\varepsilon}}^{\eta}}e^{\displaystyle\left(  \frac{\scriptstyle1}{\scriptstyle\sqrt{\varepsilon}}\int_{0}^{1}\xi(s)dW_{s}- \int_{0}^{1}\int\hat{N}_{X/\delta_{\varepsilon}}^{\varepsilon^{-1}}(dsdy)		
		-M_{1}^{\varepsilon,\phi}\right)} \\
	&\times\boldsymbol{1}_{\displaystyle A_{\varepsilon,\delta_{\varepsilon}}^{\eta}} e^{\displaystyle\left(-\frac{\scriptstyle1}{\scriptstyle2\varepsilon}\int_{0}^{1}\left\|\xi(s) \right\|^{2}ds 	  -\frac{\scriptstyle1}{\scriptstyle\varepsilon}\int_{0}^{1}\int_{\R^d}\hat{\Phi}(s,y,\phi)\nu(dy)ds\right)}\Bigg\}.
	\end{split}
	\end{equation}
	where
	\[
	\begin{aligned}
	&\hat{M}^{\varepsilon, \phi }_{t}:=\frac{\scriptstyle 1}{\scriptstyle\varepsilon}\int_{0}^{t}\int_{\R^d}k\left( \frac{\scriptstyle X^{x,\varepsilon,\delta_{\varepsilon}}_{s-}}{\scriptstyle\delta_{\varepsilon}},y\right)\varepsilon N^{\varepsilon^{-1}\log\left( 1+\phi(s,y)\right)}(dsdy),\\
	&\hat{\Phi}\left(t,y,\phi\right):=\big(1+\phi(t,y)\big)\log\left( 1+\phi(t,y)\right)- \phi(t,y).
	\end{aligned}
	\]
	Notice that
	\begin{equation*}
	A_{\varepsilon,\delta_{\varepsilon}}^{\eta}\equiv\left\lbrace\sup_{0\leqslant t\leqslant 1}\Bigg\Vert\frac{X_{t}^{x,\varepsilon,\delta_{\varepsilon}}}{\delta_{\varepsilon}}-\frac{\psi(t)}{\delta_{\varepsilon}}\Bigg\Vert\leqslant\eta \right\rbrace
	\end{equation*}
	and on this set, we have 
	$$
	\displaystyle \frac{\scriptstyle1}{\scriptstyle2}\int_{0}^{1}\left\|\xi(s) \right\|^{2}ds\leqslant \hat{V}^{\varepsilon,\delta_{\varepsilon}}(1;\psi,\phi)
	$$
	where
	\begin{equation}\label{argint}
	\begin{split}
	\hat{V}_{\eta}^{\varepsilon,\delta_{\varepsilon}}(t,\psi,\phi):=&
	\sup_{\left\lbrace \Vert\varrho\Vert_{ \mathcal{D}\left(\left[0,t\right];\R^d\right)}\leqslant \eta \right\rbrace}\frac{\scriptstyle1}{\scriptstyle 2}\int_{0}^{t}\left\|\dot{\psi}_{s}-B^{\varepsilon,\delta_{\varepsilon}}\left(\frac{\psi_s}{\delta_{\varepsilon}}+\varrho(s)\right)\right\|_{a^{-1}\left(\frac{\psi_s}{\delta_{\varepsilon}}+\varrho(s)\right)}^{2}ds
	\end{split}
	\end{equation}
	and with 
	\[
	B^{\varepsilon,\delta_{\varepsilon}}(z):=\frac{\varepsilon}{\delta_{\varepsilon}}b(z)+c(z)-\int_{\R^d}k(z,y)\nu(dy).
	\]
	For any $\varrho\in \mathcal{D}\left(\left[0,1\right];\R^d\right)$ with $\Vert\varrho\Vert_{ \mathcal{D}\left(\left[0,1\right];\R^d\right)}\leqslant \eta$, as in \cite{FS99} (Young's inequality), there exists constants $\kappa_{1}$ and $\kappa_{2}>0$  such that for all $\tilde{\eta}>0,$ 
	\begin{equation}\label{inqq}
	\begin{split}
	\frac{1}{ 2}&\left\|\dot{\psi}_{t}-B^{\varepsilon,\delta_{\varepsilon}}\left(\frac{\psi_t}{\delta_{\varepsilon}}+\varrho(t)\right) \right\|_{a^{-1}\left(\frac{\psi_t}{\delta_{\varepsilon}}+\varrho(t)\right)}^{2}\\
	&\qquad\leqslant \frac{1}{2}(1+\kappa_{1}\eta)(1+\tilde{\eta})\left\|\dot{\psi}_{t}-c\left( \frac{\psi(t)}{\delta_{\varepsilon}}\right)+k\left( \frac{\psi(t)}{\delta_{\varepsilon}}\right) \right\|_{a^{-1}\left(\frac{\psi_t}{\delta_{\varepsilon}}\right)}^{2} +\omega_{\varepsilon,\delta_{\varepsilon}}^{\eta,\tilde{\eta}},
	\end{split}
	\end{equation}
	where
	\[
	\begin{split}
	\omega_{\varepsilon,\delta_{\varepsilon}}^{\eta,\tilde{\eta}}:=&\frac{ 1}{ 2}(1+\kappa_{1}\eta)\left(1+\tilde{\eta}^{-1}\right)\kappa_{2}\sup_{\left\lbrace \substack{ z,z'\in\R^d\\\left\|z-z' \right\|\leqslant \eta}\right\rbrace }\left\lbrace  \frac{\varepsilon}{\delta_{\varepsilon}}\left\|b(z)\right\|+\left\| c(z)-c(z')\right\|\right. \\
	& +\int_{\R^d}\left\| k(z,y)-k(z',y) \right\|  \boldsymbol{1}_{\{z\neq 0\}}\nu(dy)\Big\}  . 
	\end{split} 
	\]
	From (\ref{girsa}) we have
	\begin{equation*}
	\mathbb{P}(A_{\varepsilon,\delta_{\varepsilon}}^{\eta})\geqslant\exp{\left(- \dfrac{\Psi_{\varepsilon,\eta}(1,\psi,\phi)}{\varepsilon}\right) }\times\hat{\mathbb{E}}_{\boldsymbol{\displaystyle 1}_{A_{\varepsilon,\delta_{\varepsilon}}^{\eta}}}\left\lbrace \exp\Big(-\frac{\scriptstyle1}{\scriptstyle\sqrt{\varepsilon}}\big|\Lambda_{\varepsilon}(W,N,\hat{N})\big|\Big)\right\rbrace 
	\end{equation*}
	where 
	\[
	\begin{split}
	\Psi_{\varepsilon,\eta}(1,\psi,\phi)&:=\hat{V}_{\eta}^{\varepsilon,\delta_{\varepsilon}}(1,\psi,\phi)+\sup_{\{\phi \in \mathcal{H}^2(1, \nu): \phi >-1\}}\int_{0}^{1}\int_{\R^d}\hat{\Phi}(s,y, \phi)\nu(dy)ds\\
	\Lambda_{\varepsilon}(W,N,\hat{N})&:= \int_{0}^{1}\xi(s) dW_{s}-\sqrt{\varepsilon}\int_{0}^{1}\int_{\R^d} \hat{N}_{X/\delta_{\varepsilon}}^{\varepsilon^{-1}}(dsdy)- \sqrt{\varepsilon} M_1^{\varepsilon, \phi}.
	\end{split}
	\]
	As a remind $\displaystyle \int_{0}^{1}\int_{\R^d} \hat{N}_{X/\delta_{\varepsilon}}^{\varepsilon^{-1}}(dsdy)$ is a maringal under $\hat{\mathbb P}$.
	
	\noindent Girsanov's theorem tells us that $\hat{\mathbb{P}}(A_{\varepsilon,\delta_{\varepsilon}}^{\eta})\longrightarrow 1$ when $\varepsilon,\delta_{\varepsilon}\to 0$. So, for $\varepsilon,\delta_{\varepsilon}>0$  sufficiently small, $\hat{\mathbb{P}}(A_{\varepsilon,\delta_{\varepsilon}}^{\eta})$ is positive. Thus, we have
	\begin{equation}
	\begin{split}
	\mathbb{P}(A_{\varepsilon,\delta_{\varepsilon}}^{\eta})\geqslant & \exp{\left( -\frac{\Psi_{\varepsilon,\eta}(1,\psi,\phi)}{\varepsilon}\right)}\times \hat{\mathbb{P}}(A_{\varepsilon,\delta_{\varepsilon}}^{\eta})\\ &\times\dfrac{\hat{\mathbb{E}}_{\boldsymbol{\displaystyle 1}_{A_{\varepsilon,\delta_{\varepsilon}}^{\eta}}}\left\lbrace \exp-\frac{\scriptstyle1}{\scriptstyle\sqrt{\varepsilon}}\left|\Lambda_{\varepsilon}(W,N,\hat{N})\right|\right\rbrace }{\hat{\mathbb{P}}(A_{\varepsilon,\delta_{\varepsilon}}^{\eta})}.
	\end{split}
	\end{equation}
	Therefore,
	\begin{equation}
	\begin{split}
	\mathbb{P}(A_{\varepsilon,\delta_{\varepsilon}}^{\eta})\geqslant &\exp{\left( -\frac{\Psi_{\varepsilon,\eta}(1,\psi,\phi)}{\varepsilon}\right)}\times\hat{\mathbb{P}}(A_{\varepsilon,\delta_{\varepsilon}}^{\eta})\\
	&\times\underbrace{\exp{\Bigg( -\frac{\scriptstyle1}{\scriptstyle\sqrt{\varepsilon}}\dfrac{\left[\hat{\mathbb{E}}_{\boldsymbol{\displaystyle 1}_{A_{\varepsilon,\delta_{\varepsilon}}^{\eta}}}\left|\Lambda_{\varepsilon}(W,N,\hat{N})\right| \right]}{\hat{\mathbb{P}}(A_{\varepsilon,\delta_{\varepsilon}}^{\eta})}\Bigg)} }_{\textrm{ Jensen's inequality}}
	\end{split}
	\end{equation}
	and
	\begin{equation}
	\begin{split}
	\mathbb{P}(A_{\varepsilon,\delta_{\varepsilon}}^{\eta})\geqslant &  \exp{\Bigg(-\frac{\Psi_{\varepsilon,\eta}(1,\psi,\phi)}{\varepsilon}\Bigg)}\\&
	\times	\underbrace{\exp{\Bigg( -\frac{\scriptstyle K^{1/\kappa_{3}}}{\scriptstyle \sqrt{\varepsilon}}\frac{\sqrt{2\hat{V}^{\varepsilon,\delta_{\varepsilon}}_{\eta}(1,\psi,\phi)+C(1+ \frac{1}{\delta_{\varepsilon} })}}{\left( \tilde{\omega}_{\varepsilon,\delta_{\varepsilon}}\right)^{1/\kappa_{3}} } \Bigg)}\times\tilde{\omega}_{\varepsilon,\delta_{\varepsilon}}}_{\textrm{ Burkholder-Davis-Gundy inequality}},
	\end{split}
	\end{equation}
	where, similarly as in Lemma $4.4$ of \cite{FS99}, 
	\begin{equation*}
	\tilde{\omega}_{\varepsilon,\delta_{\varepsilon}}:=\left(\frac{2\min\left(1,\frac{\delta_{\varepsilon}}{\sqrt{\varepsilon}}\kappa_{4}\right) }{\sqrt{2\pi e}} \right)^{\kappa_{3}}+o(1).
	\end{equation*}
	
	\noindent Now, we put everything together, rescale the integral on the right of (\ref{argint}), and vary $\psi$  (over all $\psi\in \mathcal{D}\left([0,1],\R^d\right)$) such that $\psi_0=x$ and $\psi_{1}=z$. Then, by the remainder of ($\boldsymbol{D.1}$), we have
	\begin{equation}
	\begin{split}
	&\mathbb{P}(A_{\varepsilon,\delta_{\varepsilon}}^{\eta})\geqslant \exp{\left\lbrace- \frac{\scriptstyle\delta_{\varepsilon}}{\scriptstyle\varepsilon}(1+\kappa_{1}\eta)(1+\tilde{\eta})V_{1/\delta_{\varepsilon}}\left(\scriptstyle\frac{ x}{ \delta_{\varepsilon}},\frac{ z}{\delta_{\varepsilon}}, 1+\phi\right) +\frac{\scriptstyle 1}{\scriptstyle\varepsilon}\omega_{\varepsilon,\delta_{\varepsilon}}^{\eta,\tilde{\eta}}\right\rbrace }\\
	&\quad\times \exp{\Bigg\{- \frac{\scriptstyle K^{1/\kappa_{3}}}{\scriptstyle\sqrt{\varepsilon}}\frac{\sqrt{2(1+\kappa_{1}\eta)(1+\tilde{\eta})\delta_{\varepsilon} V_{1/\delta_{\varepsilon}}^1\left(\frac{x}{\delta_{\varepsilon}},\frac{z}{\delta_{\varepsilon}},\scriptstyle 1+\phi\right) + C(\scriptstyle 1+ \frac{1}{\delta_{\varepsilon}})+\omega_{\varepsilon,\delta_{\varepsilon}}^{\eta,\tilde{\eta}}}}{\left( \tilde{\omega}_{\varepsilon,\delta_{\varepsilon}}\right)^{1/\kappa_{3}}}\Bigg\}}\\
	&\quad\times \tilde{\omega}_{\varepsilon,\delta_{\varepsilon}}.
	\end{split}
	\end{equation}
	
	\noindent Thus, letting consecutively $\varepsilon$,  $\delta_{\varepsilon}$, $\eta$ and then $\tilde{\eta}$ tend to zero in that order, we get
	\[
	\liminf_{\varepsilon\to 0}\varepsilon\log\mathbb{P}\left\lbrace X_{1}^{x,\varepsilon,\delta_{\varepsilon}}\in G\right\rbrace \geqslant -\inf_{ z\in G } \mathcal{J}(z-x).
	\]
\end{proof}

\subsection{The upper bound}
\noindent \noindent  We use  $\big\{\boldsymbol{\hat{T}}_{t,s}^{\varepsilon,\delta_{\varepsilon}}: t<s\big\} $ to denote the semigroup on $C^{\infty}\left( \R^{d}\right) $ generated by  the operator $\mathcal{L}_{\varepsilon,\delta_{\varepsilon}}$ with $C_{c}^{\infty}\left( \R^{d}\right)\subset\mathcal{D}\left( \mathcal{L}_{\varepsilon,\delta_{\varepsilon}}\right) $,   and let $p^{\varepsilon,\delta_{\varepsilon}}(s-t,z,y)$ denote the heat kernel associated with the semigroup $\boldsymbol{\hat{T}}_{t,s}^{\varepsilon,\delta_{\varepsilon}}$ in the sense :
\begin{equation}
\left(\boldsymbol{\hat{T}}_{t,s}^{\varepsilon,\delta_{\varepsilon}}\psi\right)(z)=\int_{\R^d}p^{\varepsilon,\delta_{\varepsilon}}(s-t,z,y)\psi(y)dy,\ t<s,\ z\in\R^{d},\  \psi\in C_{c}^{\infty}\left( \R^{d}\right) .
\end{equation}
Thus, for any $A\in\mathcal{B}\left(\R^d\right)$ the $\sigma$-algebra of all Borel subsets of $\R^d$,
\begin{equation}\label{loi-A}
\mathbb{P}\left(X_{t}^{x,\varepsilon,\delta_{\varepsilon}}\in A\right):=\int_{A/\delta_{\varepsilon}}p^{\varepsilon,\delta_{\varepsilon}}\left( \left( \frac{\scriptstyle \sqrt{\varepsilon}}{\scriptstyle \delta_{\varepsilon}}\right)^{2}t,z,\frac{ \scriptstyle x}{\scriptstyle \delta_{\varepsilon}}\right) dz,\ t\geqslant 0.  
\end{equation} 
With our requirements, it is well know that $X^{x,\varepsilon,\delta_{\varepsilon}}$ is a strong Feller process. In fact, the conditions (\ref{c1}) and (\ref{c2}) hold true. Then let us define its symbol. Before continuing, let us set
\[
\begin{split}
\hat{\mathcal{L}}^{1}_{\varepsilon,\delta_{\varepsilon}}&:=
\frac{\scriptstyle1}{\scriptstyle2}\sum_{i,j=1}^{d}a_{ij}\left(\frac{\scriptstyle x}{\scriptstyle\delta_{\varepsilon}}\right)\frac{\scriptstyle\partial^{2}}{\scriptstyle\partial x_{i}\partial x_{j}} +\sum_{i=1}^{d}b_{i}\left(\frac{\scriptstyle x}{\scriptstyle\delta_{\varepsilon}}\right)\frac{\scriptstyle\partial  }{\scriptstyle\partial x_{i}}+\frac{\scriptstyle\delta_{\varepsilon}}{\scriptstyle\varepsilon}\sum_{i=1}^{d}c_{i}\left(\frac{\scriptstyle x}{\scriptstyle\delta_{\varepsilon}}\right) \frac{\scriptstyle\partial  }{\scriptstyle\partial x_{i}}\\
\hat{\mathcal{L}}^{2}_{\varepsilon,\delta_{\varepsilon}}&:=\int_{\R^{d}}\Bigg\{\sum_{l=1}^{d}\left( i k_{l}\left( \frac{\scriptstyle x}{\scriptstyle\delta_{\varepsilon}},y\right)  \xi_{l}+\frac{\scriptstyle\varepsilon}{\scriptstyle\delta_{\varepsilon}}k_{l}\left( \frac{\scriptstyle x}{\scriptstyle\delta_{\varepsilon}},y\right)\frac{\scriptstyle\partial  }{\scriptstyle\partial x_{l}}\right)\\
&\qquad\qquad\qquad\qquad\qquad\qquad\ +\left(e^{i\left\langle k\left( \frac{\scriptstyle x}{\scriptstyle\delta_{\varepsilon}},y\right), \xi\right\rangle} -1 \right)_{y\neq 0} \Bigg\}\nu(dy)\\
\hat{\mathcal{L}}^{3}_{\varepsilon,\delta_{\varepsilon}} &:=\frac{\scriptstyle\varepsilon}{\scriptstyle\delta_{\varepsilon}}\sum_{l,r=1}^{d}a_{l,r}\left( \frac{\scriptstyle x}{\scriptstyle\delta_{\varepsilon}}\right) \xi_{l}\frac{\scriptstyle\partial  }{\scriptstyle\partial x_{l}}+i\sum_{l=1}^{d}\left(\frac{\scriptstyle\varepsilon}{\scriptstyle\delta_{\varepsilon}}b_{l}+c_{l}\right)\left( \frac{\scriptstyle x}{\scriptstyle\delta_{\varepsilon}}\right) \xi_{l}-\frac{\scriptstyle1}{\scriptstyle2}\sum_{l,r=1}^{d} \xi_{l}^{*}a_{l,r}\left( \frac{\scriptstyle x}{\scriptstyle\delta_{\varepsilon}}\right) \xi_{r}.\\
\end{split}
\]
Next, we have
\begin{equation}
\begin{split}
-q^{\varepsilon,\delta_{\varepsilon}}\left(\frac{\scriptstyle x}{\scriptstyle\delta_{\varepsilon}},\xi\right)&= \varepsilon\exp\left( -i\Big\langle \frac{\scriptstyle x}{\scriptstyle\delta_{\varepsilon}},\xi\Big\rangle \right) \mathcal{L}_{\varepsilon,\delta_{\varepsilon}} \exp\left( i\Big\langle \frac{\scriptstyle x}{\scriptstyle\delta_{\varepsilon}},\xi\Big\rangle\right)  \\
&=\left(\frac{\scriptstyle\varepsilon}{\scriptstyle\delta_{\varepsilon}}\right)^{2}\hat{\mathcal{L}}^{1}_{\varepsilon,\delta_{\varepsilon}}+\hat{\mathcal{L}}^{2}_{\varepsilon,\delta_{\varepsilon}}+\hat{\mathcal{L}}^{3}_{\varepsilon,\delta_{\varepsilon}}
\end{split}
\end{equation}
Heuristically if $\dfrac{\scriptstyle x}{\scriptstyle\delta_{\varepsilon}}\longrightarrow z$ when $\varepsilon\longrightarrow 0$, it can be seen that this operator  converges to $-q$ defined as :
\begin{equation}
\begin{split}
-q(z,\xi)=&-\frac{\scriptstyle1}{\scriptstyle2}\sum_{l,r=1}^{d} \xi_{l}^{*}a_{l,r}\left( z\right)\xi_{r}+i\sum_{r=1}^{d}\left\lbrace c_{i}\left( z\right) \xi_{r}+\int_{\R^{d}}k_{r}\left( z,y\right)\xi_{r}\nu(dy)\right\rbrace\\
&+\int_{\R^d\backslash\{0\}}\left(e^{i\left\langle k\left( z,y\right), \xi\right\rangle} -1 \right)\nu(dy) .
\end{split}
\end{equation}

\noindent Now we have the following \textit{upper bound} in space.
\begin{proposition}\label{prop2}
	Suppose the assumptions ($ \boldsymbol{H.1}$) to ($\boldsymbol{H.2}$) hold true.	for each closed subset $\ F\subseteq \R^d$
	\[
	\limsup_{\varepsilon\to 0}\varepsilon\log\mathbb{P}\left\lbrace X_{1}^{x,\varepsilon,\delta_{\varepsilon}}\in F\right\rbrace \leqslant -\inf_{z\in F} \mathcal{J}(z-x).
	\]
\end{proposition}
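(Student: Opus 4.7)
My strategy for this upper bound is analytic, dual to the probabilistic Girsanov argument used for the lower bound. First, I would use the density representation (\ref{loi-A}) together with the Hartman--Wintner density estimate of Lemma \ref{hwc}. Under \textbf{H.2}, the symbol $q^{\varepsilon,\delta_{\varepsilon}}$ constructed just above satisfies the quadratic growth bound (\ref{c1}) and, thanks to the uniform ellipticity (\ref{uniel}) of the diffusion part, also the lower growth condition (\ref{c2}). Lemma \ref{hwc} then yields a pointwise Fourier-integral upper bound
\[
\sup_{y,y'} p^{\varepsilon,\delta_{\varepsilon}}(t,y,y') \leqslant \int \exp\left(-\frac{t}{16}\inf_{w}\textnormal{Re}\, q^{\varepsilon,\delta_{\varepsilon}}(w,\xi)\right) d\xi,
\]
which, inserted back into (\ref{loi-A}), controls $\mathbb{P}(X_{1}^{x,\varepsilon,\delta_{\varepsilon}}\in F)$.

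Second, I would reduce to the compact piece $F\cap\overline{B(0,R)}$ via exponential tightness. This follows from \textbf{H.2} and an exponential martingale estimate for the SDE (\ref{sde1}): the remainder $\mathbb{P}(\|X_{1}^{x,\varepsilon,\delta_{\varepsilon}}\|>R)$ decays super-exponentially in $1/\varepsilon$ as $R\to\infty$. On the compact piece, I would cover by finitely many balls $B(z_{i},\eta)$ of small radius; using the convexity (hence continuity) of $\mathcal{J}$ noted at the start of Section~\ref{s3}, one loses only an arbitrarily small amount in passing from $\mathcal{J}(z_{i}-x)$ to $\inf_{z\in B(z_{i},\eta)}\mathcal{J}(z-x)$, and the union-bound sum reduces to its worst term on the logarithmic scale.

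Third, for each ball $B(z_{i},\eta)$, I would apply Chebyshev's inequality with the exponential martingale
\[
\mathcal{E}_{t}^{\xi} := \exp\left(\frac{1}{\varepsilon}\langle\xi, X_{t}^{x,\varepsilon,\delta_{\varepsilon}}-x\rangle - \frac{1}{\varepsilon}\int_{0}^{t}\widetilde{q}^{\varepsilon,\delta_{\varepsilon}}\left(\frac{X_{s}^{x,\varepsilon,\delta_{\varepsilon}}}{\delta_{\varepsilon}},\xi\right)ds\right),
\]
whose compensator $\widetilde{q}^{\varepsilon,\delta_{\varepsilon}}$ is read off from the symbol computation preceding the proposition. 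Optimising over $\xi$ turns the resulting bound into a Legendre transform of an effective Hamiltonian built from $q^{\varepsilon,\delta_{\varepsilon}}$. Under \textbf{H.1}, $\delta_{\varepsilon}/\varepsilon\to\infty$ forces the coefficient oscillations to average out on the relevant time scale, so that this Legendre transform converges to $\mathcal{J}(z_{i}-x)$.

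The principal technical obstacle is the identification in the final step: verifying that the Legendre transform of the limiting (homogenised) symbol coincides with the variationally defined rate function $\mathcal{J}$ from $(\boldsymbol{D.2})$. This is a convex-duality statement between the Hamiltonian and Lagrangian formulations of the effective control problem, complicated here by the jump part: matching the L\'evy term of $q(z,\xi)$ with the relative-entropy contribution $V_{L}^{2}$ requires a Cram\'er-type dual characterisation for Poisson point measures, while uniform (in $\varepsilon$) control of the homogenisation corrector as $\delta_{\varepsilon}\to 0$ is needed in order to commute the limit $\varepsilon\to 0$ with the supremum over $\xi$.
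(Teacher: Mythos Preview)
Your plan and the paper's proof share the same starting point---Lemma \ref{hwc} and the density representation (\ref{loi-A})---but then diverge. The paper stays entirely on the analytic side: it rewrites the Fourier-integral bound on $p^{\varepsilon,\delta_\varepsilon}$ directly in terms of two explicit Legendre transforms, a quadratic one $\hat{Q}_1^*(v)=\langle v,a^{-1}v\rangle$ coming from the diffusion part of the symbol and a Cram\'er-type one $\hat{Q}_2^*(v,\theta)=\hat{Q}_2(\|v\|/\|\theta\|)$ with $\hat{Q}_2(x)=x\log x-x+1$ coming from the jump part. These two pieces are then rescaled (scaling of $p^{\varepsilon,\delta_\varepsilon}$ plus a time change $s\mapsto s\,\varepsilon/\delta_\varepsilon^2$) so that they match $V_{1/\delta_\varepsilon}^1$ and $V_{1/\delta_\varepsilon}^2$ exactly, and letting $\varepsilon\to 0$ yields $\mathcal{J}$ straight from $(\boldsymbol{D.2})$. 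No exponential tightness, no covering, no exponential martingale appears. Your route---tightness, finite cover, Chebyshev with $\mathcal{E}_t^\xi$, then duality---is the classical Freidlin--Wentzell scheme and would also work, with the advantage of not depending on heat-kernel bounds; but then your Step~1 becomes idle, since the exponential-martingale argument of Step~3 never uses the density estimate. Conversely, the paper extracts the rate function purely from the density bound and thereby bypasses your Steps~2--3 entirely. Note finally that the ``principal technical obstacle'' you flag---identifying the Legendre dual of the limiting symbol with $\mathcal{J}$---is precisely what the paper does by hand via $\hat{Q}_1^*$ and $\hat{Q}_2^*$, so that computation is unavoidable in either approach.
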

\begin{proof}
	Before proceeding, let $\hat{Q}_{1}$ be the quadratic form defined as $\hat{Q}_{1}(v):=\left\langle v,av\right\rangle $ and let $\hat{Q}_{1}^{*}$ be the conjugate quadratic form of $\hat{Q}_{1}$ defined as :
	\[
	\hat{Q}_{1}^{*}(v):=\sup_{t\in\R^{d}}\biggl\{2\left\langle t,v\right\rangle-\hat{Q}_{1}(t)\biggr\}.
	\]
	It is well know that if the inverse of the matrix $a$ exists, then $$\hat{Q}_{1}^{*}(v):=\left\langle v,a^{-1}v\right\rangle .$$
	Next, fix $\theta\in\R^d$ and let $\displaystyle\hat{Q}^{*}_{2}(v,\theta)=\sup_{t\in\R^d}\displaystyle\Bigl\{\left\langle t,v\right\rangle-\Big[\exp\left( \left\langle \theta,t\right\rangle \right)-1\Big]\Bigr\}$. Then, it is easy to see that
	\[
	\hat{Q}^{*}_{2}(v,\theta)=\hat{Q}_{2}\left(\left\| v\right\|\times \left\| \theta\right\|^{-1} \right),\ \textrm{ with }\ \hat{Q}_{2}(x):=x\log x-x+1 .
	\]
	
	\noindent Now, fix $x,z\in\R^d$ and $t>0$, Lemma \ref{hwc} tells us
	\begin{equation*}
	\begin{split}
	\sup_{x,z\in\R^d}p^{\varepsilon,\delta_{\varepsilon}}\left(t,x,z\right) &\leqslant\int\exp\left( -\frac{\scriptstyle 1}{\scriptstyle16}\inf_{\substack{\psi\in \mathcal{D}\left(\left[0,t\right];\R^d\right)\\\psi_0=x,\ \psi_t=z}}\int_{0}^{t}\textrm{Re}q^{\varepsilon,\delta_{\varepsilon}}\left( \frac{\scriptstyle\psi_{s}}{\scriptstyle\delta_{\varepsilon}},\xi\right)ds \right)d\xi \\
	&\leqslant\int\exp\left( \frac{\scriptstyle 1}{\scriptstyle2}\inf_{\substack{\psi\in \mathcal{D}\left(\left[0,t\right];\ \R^d\right)\\\psi_0=x,\ \psi_t=z}}\int_{0}^{t}\big\langle \xi,a\left( \frac{\scriptstyle\psi_{s}}{\scriptstyle\delta_{\varepsilon}}\right) \xi\big\rangle ds\right. \\
	+\inf_{\substack{\psi\in \mathcal{D}\left(\left[0,t\right];\ \R^d\right)\\\psi_0=x,\ \psi_t=z}}& \left. \int_{0}^{t}\int_{\R^d}\textrm{Re}\Biggl\{1-e^{\Big(\displaystyle i\big\langle k\left( \frac{\scriptstyle\psi_{s}}{\scriptstyle\delta_{\varepsilon}},y\right),\xi\big\rangle \Big)}\Biggr\}_{y\neq 0}dsdy \right)  +o(1).
	\end{split}
	\end{equation*}
	Thus
	\begin{equation*}
	\begin{split}
	&\sup_{x,z\in\R^d}p^{\varepsilon,\delta_{\varepsilon}}\left(t,x,z\right) \\
	&\leqslant\exp\left( -\frac{\scriptstyle 1}{\scriptstyle2}\inf_{\substack{\psi\in \mathcal{D}\left(\left[0,t\right];\ \R^d\right)\\\psi_0=x,\ \psi_t=z}}\int_{0}^{t}\hat{Q}_{1}^{*}\left(\dot{\psi}_{s}-\frac{\scriptstyle\delta_{\varepsilon}}{\scriptstyle\varepsilon}\left\lbrace c+\int_{\R^d}k(.,y)\nu(dy)\right\rbrace \left( \frac{\scriptstyle\psi_{s}}{\scriptstyle\delta_{\varepsilon}}\right) \right)ds\right) \\
	&\times\exp\left(\inf_{\substack{\psi\in \mathcal{D}\left(\left[0,t\right];\ \R^d\right)\\\psi_0=x,\ \psi_t=z}}\sup_{\xi\in\R^d}\int_{0}^{t}\Big\langle \dot{\psi}_s-\frac{\scriptstyle\delta_{\varepsilon}}{\scriptstyle\varepsilon}\left\lbrace c+\int_{\R^d}k(.,y)\nu(dy)\right\rbrace \left( \frac{\scriptstyle\psi_{s}}{\scriptstyle\delta_{\varepsilon}}\right),\xi \Big\rangle ds \right)\\
	&\times\int\exp\left( \inf_{\substack{\psi\in \mathcal{D}\left(\left[0,t\right];\ \R^d\right)\\\psi_0=x,\ \psi_t=z}}\int_{0}^{t}\int_{\R^d}\textrm{Re}\Biggl\{1-e^{\Big(\displaystyle i\big\langle k\left( \frac{\scriptstyle\psi_{s}}{\scriptstyle\delta_{\varepsilon}},y\right),\xi\big\rangle \Big)}\Biggr\}_{y\neq 0}dsdy\right)d\xi\\
	&+o(1).
	\end{split}
	\end{equation*}
	Thereupon, we have
	\begin{equation*}
	\begin{split}
	&\sup_{x,z\in\R^d}p^{\varepsilon,\delta_{\varepsilon}}\left(t,x,z\right) \\
	&\leqslant\exp\left( -\frac{\scriptstyle 1}{\scriptstyle2}\inf_{\substack{\psi\in \mathcal{D}\left(\left[0,t\right];\ \R^d\right)\\\psi_0=x,\ \psi_t=z}}\int_{0}^{t}\hat{Q}_{1}^{*}\left( \dot{\psi}_{s}-\frac{\scriptstyle\delta_{\varepsilon}}{\scriptstyle\varepsilon}\left\lbrace c+\int_{\R^d}k(.,y)\nu(dy)\right\rbrace \left( \frac{\scriptstyle\psi_{s}}{\scriptstyle\delta_{\varepsilon}}\right) \right)ds\right) \\
	&\times K\exp\Bigg( -\inf_{\substack{\psi\in \mathcal{D}\left(\left[0,t\right];\ \R^d\right)\\\psi_0=x,\ \psi_t=z}}\Biggl\{\int_{0}^{t}\hat{Q}^{*}_{2}\left( \dot{\psi}_{s},\int_{\R^{d}}k\left(\frac{\scriptstyle\psi_{s}}{\scriptstyle\delta_{\varepsilon}},y\right)dy\right)ds \\
	&\quad\qquad\qquad+\left. \frac{\scriptstyle\delta_{\varepsilon}}{\scriptstyle\varepsilon}\sup_{\xi\in\R^d}\int_{0}^{t}\Big\langle \left\lbrace c+\int_{\R^d}k(.,y)\nu(dy)\right\rbrace \left( \frac{\scriptstyle\psi_{s}}{\scriptstyle\delta_{\varepsilon}}\right),\xi \Big\rangle ds\Bigg]\Biggr\}\right) +o(1).
	\end{split}
	\end{equation*}
	This follows
	\begin{equation}
	\begin{split}\label{mac}
	&\sup_{x,z\in\R^d}p^{\varepsilon,\delta_{\varepsilon}}\left(t,\frac{\scriptstyle x}{\scriptstyle\delta_{\varepsilon}},\frac{\scriptstyle z}{\scriptstyle\delta_{\varepsilon}}\right) \\
	&\leqslant\exp\left( -\frac{\scriptstyle 1}{\scriptstyle2}\inf_{\substack{\psi\in \mathcal{D}\left(\left[0,t\right];\ \R^d\right)\\\psi_0=\frac{x}{\delta_{\varepsilon}},\ \psi_t=\frac{z}{\delta_{\varepsilon}}}}\int_{0}^{t}\hat{Q}_{1}^{*}\left(\frac{\scriptstyle\dot{\psi}_{s}}{\scriptstyle\delta_{\varepsilon}}-\frac{\scriptstyle\delta_{\varepsilon}}{\scriptstyle\varepsilon}\left\lbrace c+\int_{\R^d}k(.,y)\nu(dy)\right\rbrace \left( \frac{\scriptstyle\psi_{s}}{\scriptstyle\delta_{\varepsilon}}\right) \right)ds\right) \\
	&\times K\exp\Bigg( -\inf_{\substack{\psi\in \mathcal{D}\left(\left[0,t\right];\ \R^d\right)\\\psi_0=\frac{x}{\delta_{\varepsilon}},\ \psi_t=\frac{z}{\delta_{\varepsilon}}}}\Biggl\{\int_{0}^{t}\hat{Q}^{*}_{2}\left(\frac{\scriptstyle\dot{\psi}_{s}}{\scriptstyle\delta_{\varepsilon}},\int_{\R^{d}}k\left(\frac{\scriptstyle\psi_{s}}{\scriptstyle\delta_{\varepsilon}},y\right)dy\right)ds \\
	&\quad\qquad\qquad+\left. \frac{\scriptstyle\delta_{\varepsilon}}{\scriptstyle\varepsilon}\sup_{\xi\in\R^d}\int_{0}^{t}\Big\langle \left\lbrace c+\int_{\R^d}k(.,y)\nu(dy)\right\rbrace \left( \frac{\scriptstyle\psi_{s}}{\scriptstyle\delta_{\varepsilon}}\right),\xi \Big\rangle ds\Bigg]\Biggr\}\right) +o(1).
	\end{split}
	\end{equation}
	From (\ref{loi-A}) we have
	\begin{equation}
	\begin{split}
	\mathbb{P}\left( X_{1}^{x,\varepsilon,\delta_{\varepsilon}}\in F\right) &=\int_{ F/\delta_{\varepsilon}}p^{\varepsilon,\delta_{\varepsilon}}\left(\left( \sqrt{\varepsilon}/\delta_{\varepsilon}\right)^{2},\frac{\scriptstyle  x}{\scriptstyle \delta_{\varepsilon}},z\right)dz\\
	&=\underbrace{\delta_{\varepsilon}^{-d}\int_{F}p^{\varepsilon,\delta_{\varepsilon}}\left(\left( \sqrt{\varepsilon}/\delta_{\varepsilon}\right)^{2},\frac{\scriptstyle x}{\scriptstyle \delta_{\varepsilon}},\frac{\scriptstyle  z}{\scriptstyle \delta_{\varepsilon}}\right)dz}_{\textrm{by scaling property}}.
	\end{split}
	\end{equation}
	By (\ref{mac}), we deduct
	\[
	\begin{split}
	&\varepsilon\log\mathbb{P}\left( X^{x,\varepsilon,\delta_{\varepsilon}}_{1}\in F\right)\\
	&\leqslant-\frac{\scriptstyle \varepsilon}{\scriptstyle2}\inf_{\substack{\psi\in \mathcal{D}\left(\left[0,\frac{\varepsilon}{\delta_{\varepsilon}^{2}}\right];\R^d\right)\\\psi_0=\frac{x}{\delta_{\varepsilon}},\ \psi_{\frac{\varepsilon}{\delta_{\varepsilon}^{2}}}=\frac{z}{\delta_{\varepsilon}}}}\int_{0}^{\frac{\varepsilon}{\delta_{\varepsilon}^{2}}}\hat{Q}_{1}^{*}\left(\frac{\scriptstyle\dot{\psi}_{s}}{\scriptstyle\delta_{\varepsilon}}-\frac{\scriptstyle\delta_{\varepsilon}}{\scriptstyle\varepsilon}\left\lbrace c+\int_{\R^d}k(.,y)\nu(dy)\right\rbrace \left(\frac{\scriptstyle\psi_{s}}{\scriptstyle\delta_{\varepsilon}}\right) \right)ds\\
	&\quad-\varepsilon\inf_{\substack{\psi\in \mathcal{D}\left(\left[0,\frac{\varepsilon}{\delta_{\varepsilon}^{2}}\right];\R^d\right)\\\psi_0=\frac{x}{\delta_{\varepsilon}},\ \psi_{\frac{\varepsilon}{\delta_{\varepsilon}^{2}}}=\frac{z}{\delta_{\varepsilon}}}}\Biggl\{\int_{0}^{\frac{\varepsilon}{\delta_{\varepsilon}^{2}}}\hat{Q}^{*}_{2}\left(\frac{\scriptstyle\dot{\psi}_{s}}{\scriptstyle\delta_{\varepsilon}},\int_{\R^{d}}k\left(\frac{\scriptstyle\psi_{s}}{\scriptstyle\delta_{\varepsilon}},y\right)dy\right)ds \\
	&\quad\qquad\qquad+\frac{\scriptstyle\delta_{\varepsilon}}{\scriptstyle\varepsilon}\sup_{\xi\in\R^d}\int_{0}^{\frac{\varepsilon}{\delta_{\varepsilon}^{2}}}\Big\langle \left\lbrace c+\int_{\R^d}k(.,y)\nu(dy)\right\rbrace \left( \frac{\scriptstyle\psi_{s}}{\scriptstyle\delta_{\varepsilon}}\right),\xi \Big\rangle ds\Bigg]\Biggr\} +o(1)\\
	&\leqslant-\frac{\scriptstyle\delta_{\varepsilon}}{\scriptstyle2}\inf_{\substack{\psi\in \mathcal{D}\left(\left[0,\frac{1}{\delta_{\varepsilon}}\right];\R^d\right)\\\psi_0=\frac{x}{\delta_{\varepsilon}},\ \psi_{\frac{1}{\delta_{\varepsilon}}}=\frac{z}{\delta_{\varepsilon}}}}\int_{0}^{\frac{1}{\delta_{\varepsilon}}}\hat{Q}_{1}^{*}\left(\dot{\psi}_{s}-\left\lbrace c+\int_{\R^d}k(.,y)\nu(dy)\right\rbrace \left(\psi_{s}\right) \right)ds\\
	&\quad-\inf_{\substack{\psi\in \mathcal{D}\left(\left[0,\frac{\varepsilon}{\delta_{\varepsilon}}\right];\R^d\right)\\\psi_0=\frac{x}{\delta_{\varepsilon}},\ \psi_{\frac{1}{\delta_{\varepsilon}}}=\frac{z}{\delta_{\varepsilon}}}}\Biggl\{\delta_{\varepsilon}\int_{0}^{\frac{1}{\delta_{\varepsilon}}}\hat{Q}^{*}_{2}\left( \dot{\psi}_{s},\int_{\R^{d}}k\left(\psi_{s},y\right)dy\right)ds \\
	&\quad\qquad\qquad+\varepsilon\sup_{\xi\in\R^d}\int_{0}^{\frac{1}{\delta_{\varepsilon}}}\Big\langle \left\lbrace c+\int_{\R^d}k(.,y)\nu(dy)\right\rbrace \left(\psi_{s}\right),\xi \Big\rangle ds\Bigg]\Biggr\} +o(1).
	\end{split}
	\]
	Therefore, the claim follows, \textit{i.e}:
	\[
	\begin{split}
	\lim_{\varepsilon\to 0}\varepsilon\log\mathbb{P}\left( X^{x,\varepsilon,\delta_{\varepsilon}}_{1}\in F\right)&\leqslant-\lim_{\varepsilon\to 0}\delta_{\varepsilon}V^{1}_{1/\delta_{\varepsilon}}(x,z)-\liminf_{\varepsilon\to 0}\delta_{\varepsilon} V^{2}_{1/\delta_{\varepsilon}}(1+\phi)\\
	&\leqslant-\inf_{z\inf F}\mathcal{J}(z-x),
	\end{split}
	\]
	where $\displaystyle\phi+1:=\left\| \dot{\psi}\right\| \times\left(\int_{0}^{.}\int_{\R^d}\left\| k(.,y)\right\| \nu(dy) ds\right) ^{-1}$.
\end{proof}
\subsection{Tightness}
\noindent Let $\mathcal{D}^{\lambda}\left([0,T],\R^d \right)$ denotes the space of H\"{o}lder-c\`{a}ld\`{a}g functions of exponent $\lambda$ and $\left\|.\right\|_{\mathcal{D}^{\lambda}\left([0,T],\R^d\right)}$ its corresponding norm. We need the following remark with projective limit approach (see, for example \cite{DZ93}) to guess at the path-space large deviations principle.  
\begin{rem}
	for any fixed $T>0$, $x\in\R^d$ and $\lambda\in\left(0,1/2\right)$,
	\[
	\lim_{L\to +\infty}\limsup_{\varepsilon\to 0}\varepsilon\log\mathbb{P}\left\lbrace \left\|X^{x,\varepsilon,\delta_{\varepsilon}} \right\|_{\mathcal{D}^{\lambda}\left([0,T],\boldsymbol{\overline{D}} \right)}\geqslant L\right\rbrace=-\infty. 
	\]
\end{rem}


\vskip.5cm \noindent{\bf Acknowledgement(s) :} We would like to thank the referee(s) for his comments and suggestions on the manuscript.

\medskip

%
%
%

\vskip.5cm

\end{document}